\newcommand{\ind}[1]{\text{1}\{#1\}}
\renewcommand{\paragraph}[1]{\noindent\textbf{#1}\quad}
\newcommand{\compilehidecomments}{false} 
	\newcommand{\yiliu}[1]{}
	\newcommand{\milan}[1]{}
	\newcommand{\yiliu}[1]{{\color{red}  [Yiliu:#1]}}
	\newcommand{\milan}[1]{{\color{blue} [Milan:#1]}}
\newtheorem{theorem}{Theorem}[section]
\newtheorem{corollary}{Corollary}[section]
\newtheorem{lemma}[theorem]{Lemma}
\newtheorem{definition}{Definition}[section]
\def\munderbar#1{\underline{\sbox\tw@{$#1$}\dp\tw@\z@\box\tw@}}
\renewcommand{\Pr}{\mathbb{P}}
\newcommand{\E}{\mathbb{E}}
\title{Sketching stochastic valuation functions}
\date{}
\author{ Milan Vojnovi\' c \\
	Department of Statistics\\
	London School of Economics\\
	\texttt{m.vojnovic@lse.ac.uk} \\
	\And
	\hspace{1mm}Yiliu Wang \\
	Allen Institute\\
	Department of Neurobiology \& Biophysics, University of Washington\\
	\texttt{yiliuw@uw.edu} \\
}
\renewcommand{\Pr}{\mathbb{P}}
\begin{document}

\title{Sketching Stochastic Valuation Functions}

\author{
Milan Vojnovi\'c%
\thanks{Department of Statistics, London School of Economics. Email: \texttt{m.vojnovic@lse.ac.uk}}
\and
Yiliu Wang%
\thanks{Allen Institute. Email: \texttt{yiliu.wang@alleninstitute.org}}
\thanks{Department of Neurobiology \& Biophysics, University of Washington. Email: \texttt{yiliuw@uw.edu}}
}

\maketitle

\begin{abstract}
We consider the problem of sketching set valuation functions, defined as the expectation of a valuation function applied to independent random item values. For valuation functions that are monotone and either subadditive or submodular, and that satisfy a weak homogeneity condition (or other structural conditions), we show that there exist discretized versions of the item value distributions---each with support size $O(k \log k)$---that yield a sketch valuation function providing a constant-factor approximation to the true valuation for any subset of items of size at most $k$. These discretized distributions can be computed efficiently for each item independently, making the approach highly scalable. Our results apply broadly to valuation functions commonly encountered in practice, including team performance based on the best member (e.g., maximum functions), constant elasticity of substitution (CES) production functions with diminishing returns in economics, and others. Sketch valuation functions are especially useful in optimization problems such as best set selection and welfare maximization, where exact value computations are costly or intricate. They enable efficient approximate evaluation of value oracle queries while preserving provable approximation guarantees for the original stochastic optimization problem.
\end{abstract}

\section{Introduction}\label{sec:Intro}
Evaluating the value of a set of items is critical in many applications, including ranking and selection tasks in recommender systems, information retrieval, assortment optimization, and team formation on online platforms such as gaming and freelancing. In e-commerce, information retrieval, and recommender systems, items like products, documents, or media are recommended based on predicted relevance scores. These scores—computed using machine learning models informed by item features, user preferences, and context—are inherently uncertain \cite{uncertaintyretrieval,risky}. In digital advertising, ads are selected based on uncertain click-through rate predictions \cite{ctr}. In online gaming and freelancing platforms, items correspond to players or workers, and their values represent individual skills or performance on specific tasks. Efficiently computing set valuations under uncertainty is essential, particularly when some approximation error is acceptable. This task is challenging due to both the uncertainty in individual item values and the non-linear nature of set valuation functions.

We consider the problem of approximating a class of stochastic valuation functions defined over a ground set of items $\Omega = \{1, \ldots, n\}$, where the function takes the form 
\begin{equation*}
u(S) = \mathbb{E}[f(X_S)] \quad \text{for } S \subseteq \Omega,
\label{equ:us0}
\end{equation*}
with $f: \mathbb{R}_+^n \rightarrow \mathbb{R}_+$ being a given function. The vector $X_S$ has its $i$-th component equal to $X_i$ if $i \in S$ and equal to a neutral element $0$ otherwise. The variables $X_1, \ldots, X_n$ represent the values of individual items, modeled as independent random variables with cumulative distribution functions $P_1, \ldots, P_n$, respectively. Our goal is to approximate $u$ using a sketch function $v$ such that for a given $\mathcal{F} \subseteq 2^{\Omega}$, 
\[
v(S) \leq u(S) \leq \alpha v(S), \quad \forall S \in \mathcal{F},
\] 
for some $\alpha \geq 1$. A function $v$ satisfying this condition is called an \emph{$\alpha$-approximate sketch valuation function} (or simply an \emph{$\alpha$-sketch}) on $\mathcal{F}$. When $\mathcal{F} = 2^{\Omega}$, $v$ is an $\alpha$-approximation of $u$ on all subsets. It is crucial that the sketch function be efficiently computable, enabling fast approximate value queries. We are particularly interested in applying such sketches to optimization problems, including best set selection and welfare maximization.

\begin{figure}
\begin{center}
\includegraphics[width=0.8\linewidth]{./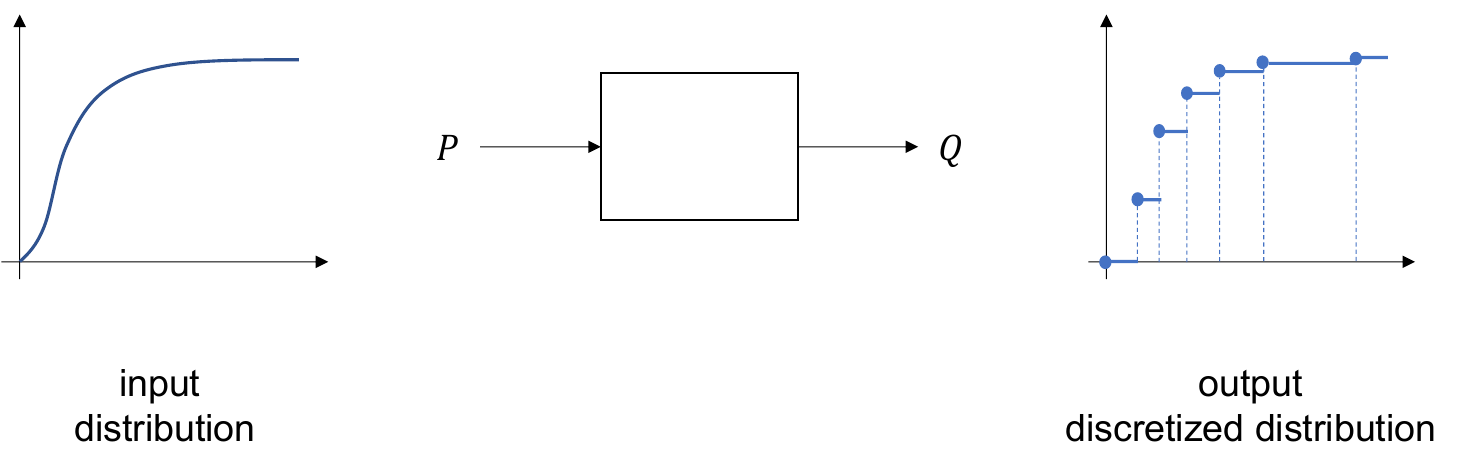} \caption{Sketching by discretizing distributions. Each item's value distribution is discretized independently, such that the resulting discrete distributions have finite supports and provide a satisfactory approximation guarantee when used to evaluate the corresponding sketch set valuation function.\label{fig:sketch}} 
\end{center}
\end{figure}

We propose a method for approximating stochastic valuation functions by discretizing the distributions of individual item values into finite supports, yielding a sketch function that guarantees a constant-factor approximation. This approach enables efficient evaluation of value queries for any subset of items up to a specified size. Specifically, our algorithm converts the cumulative distribution function of each item $P_i$ into a discrete cumulative distribution function $Q_i$ (see Figure~\ref{fig:sketch}). We establish approximation guarantees showing that, for monotone subadditive or submodular valuation functions satisfying a weak homogeneity condition (or certain other conditions), the algorithm produces discretized distributions with support size $O(k \log k)$, yielding a constant-factor approximation sketch for any query involving a set of up to $k$ items. We further show that using such sketch functions guarantees a constant-factor approximation for best set selection and welfare maximization problems, provided that $k$ is small enough relative to $n$.

\subsection{Related Work}

We begin with a brief review of theoretical results on the approximation limits for certain classes of set functions. The work of \cite{GHIM09} was the first to formulate the problem of approximating a submodular function \emph{everywhere}, i.e., approximating its value for all subsets of items given access to a value oracle. They presented an $O(\sqrt{n} \log n)$ approximation algorithm and proved that no algorithm can achieve a better approximation factor than $\Omega(\sqrt{n}/\log n)$. \cite{Balcan} showed that for some matroid rank functions—a subclass of submodular set functions—every sketch fails to achieve an approximation factor better than $n^{1/3}$. \cite{BDFKNR12} extended these results by showing that every subadditive set function $u$ has an $\tilde{O}(\sqrt{n})$ sketch, where the $\tilde{O}(\cdot)$ notation hides polylogarithmic factors in $n$, using a polynomial number of demand queries. They also demonstrated that any deterministic algorithm relying solely on a value oracle cannot guarantee an approximation factor better than $n^{1-\epsilon}$. The algorithms discussed above are based on geometric constructions, particularly in finding an ellipsoid that closely approximates the polymatroid associated with $u$.

Building on this foundational work, subsequent research has focused on improving computational and query complexity. \cite{CD17} introduced faster and simpler algorithms for sketching valuation functions. They presented an algorithm that computes a $\tilde{O}(\sqrt{n})$ sketch for submodular set functions using $\tilde{O}(n^{3/2})$ value queries, as well as another algorithm that achieves a $\tilde{O}(\sqrt{n})$ approximation for subadditive functions using $O(n)$ demand and value queries.

The problem of approximating the expected value of a function of independent random variables was first studied in \cite{K81}, which focused on approximating the expected value of certain functions of a sum of independent random variables using a function of the expected values of their univariate marginal distributions. \cite{AN16} addressed the problem of maximizing a monotone submodular function, defined as the expected value of a monotone submodular value function, subject to a matroid constraint. \cite{KR18} studied this problem under cardinality constraints, focusing on a class of test score algorithms that use one-dimensional representations of each item's value distribution. They showed that for objective functions defined by the sum of top-order statistics, certain test scores yield constant-factor approximations. Building on this, \cite{SVY21} proved within a sketch-based framework the existence of test scores that provide constant-factor approximations for monotone submodular functions satisfying an extended diminishing returns property. Notably, they identified an $O(\log k)$-approximate sketch function using a $k$-dimensional test score. To the best of our knowledge, this is the best-known sketch for monotone stochastic valuation functions satisfying the extended diminishing returns property. \cite{lee2021test} further extended the test score framework to address stochastic valuation maximization under more general budget constraints. 

\cite{Mehta} introduced a Polynomial Time-Approximation Scheme (PTAS) for the stochastic valuation maximization problem with a maximum value objective function and a cardinality constraint limiting the set size to at most $k$ items. Their approach represents each item's distribution using a histogram of size $O(k \log k)$. Our algorithm adopts a similar distribution discretization strategy, involving truncation of the tail and exponential binning of the remaining distribution. A key distinction is that our algorithm computes a discrete distribution for each item independently, whereas \cite{Mehta} used a common set of binning boundaries for all items' distributions. Another difference to our work is in that we provide sketching results for approximating a stochastic valuation function. 

\subsection{Organization of the Paper}

In Section~\ref{sec:problem}, we formally define the problem and present the necessary background and preliminary results. Section~\ref{sec:algoapprox} introduces our sketching algorithm and establishes approximation guarantees for set functions under various assumptions. In Section~\ref{sec:opt}, we discuss the implications of our sketch functions for approximately solving best set selection and welfare maximisation problems. Numerical results are presented in Section~\ref{sec:num}. Finally, concluding remarks are provided in Section~\ref{sec:conc}. Proofs omitted from the main text are available in the online companion.

\section{Problem Formulation}
\label{sec:problem}

Let $\Omega = \{1, \ldots, n\}$ be a ground set of items. Each item $i \in \Omega$ has a value according to the random variable $X_i$ with distribution $P_i$. The variables $X_1, \ldots, X_n$ are assumed to be independent. The value of a set of items $S \subseteq \Omega$ is given by the set valuation function $u(S)$ defined as
\begin{equation}
u(S) = \mathbb{E}[f(X_S)],
\label{equ:sutil}
\end{equation}
where $f: \mathbb{R}^n \rightarrow \mathbb{R}_+$ is a monotone function. Hereinafter, for any set $S \subseteq \Omega$ and any vector $x=(x_1, \ldots, x_n)^\top$, we use the notation $x_S$ to denote a vector $z \in \mathbb{R}^n$ such that $z_i = x_i$ if $i \in S$ and $z_i = 0$ otherwise.

We compute $Q_1, \ldots, Q_n$ as representations of $P_1, \ldots, P_n$ corresponding to the distributions of discrete random variables $Y_1, \ldots, Y_n$. We define the sketch set function $v$ as the expected value of $f$ with respect to the item value distributions $Q_1, \ldots, Q_n$. The sketch function $v$ should approximate the set valuation function $u$ within a multiplicative error tolerance on a given $\mathcal{F} \subseteq 2^\Omega$, i.e., for some $\alpha \geq 1$,
\begin{equation*}
v(S) \leq u(S) \leq \alpha v(S), \quad \forall S \in \mathcal{F}.
\end{equation*}
When this guarantee holds, we say that $v$ is an \emph{$\alpha$-approximation} of $u$, or that $v$ is \emph{an $\alpha$-sketch} of $u$, on $\mathcal{F}$. If $\mathcal{F} = 2^\Omega$, then $v$ is an $\alpha$-approximation of $u$ \emph{everywhere}. 

For a positive integer $k$, let $\mathcal{F}_k = \{S \subseteq \Omega : |S| \leq k\}$. The problem we consider is to design an algorithm that outputs an $\alpha$-sketch valuation function on $\mathcal{F}_k$, for any given $k \in [n]$. This is considered in the computation model under which the algorithm has access to a value oracle that outputs the value $P_i(x)$ for any input $x \in \mathbb{R}$ and $i \in \Omega$. Preferably, a good sketch has a small representation size, and the approximation factor $\alpha$ should not depend on $k$. We aim to address the challenge of finding good sketches for a wide range of function classes, defined as the expected value with respect to discretized item value distributions.

\subsection{Background on Function Classes and Their Properties} 
In what follows, we define several classes of valuation functions and summarize key properties used throughout the paper. Readers familiar with these concepts may choose to skip this section.

A function $f$ is \emph{monotone} if $f(x) \leq f(y)$ for every $x, y$ in the domain of $f$ such that $x \leq y$, where the inequality is interpreted coordinate-wise.

\subsubsection{Diminishing Returns and Submodular Functions}

A function $f$ is \emph{submodular} if it satisfies a diminishing returns property over a product domain $\mathcal{X} = \mathcal{X}_1 \times \cdots \times \mathcal{X}_n$, where each $\mathcal{X}_i \subset \mathbb{R}$ is compact. Specifically, $f$ is submodular if for all $x, y \in \mathcal{X}$,
\[
f(x \wedge y) + f(x \vee y) \leq f(x) + f(y),
\]
where $\wedge$ and $\vee$ denote the coordinate-wise minimum and maximum, respectively. When $\mathcal{X} = \{0,1\}^n$, this definition coincides with the standard notion of submodularity for set functions. A twice-differentiable function $f$ is submodular if and only if all off-diagonal entries of its Hessian are non-positive, i.e., $\partial^2 f(x)/\partial x_i \partial x_j \leq 0$ for all $i \neq j$, for every $x$ \cite{topkis}.

A function $f$ satisfies the \emph{weak diminishing returns (weak DR)} property if, for every $x, y \in \mathcal{X}$ such that $x \leq y$ coordinate-wise, for every $i \in [n]$ such that $x_i = y_i$, and for every $z \in \mathbb{R}_+$ such that $x + z e_i \in \mathcal{X}$ and $y + z e_i \in \mathcal{X}$, the following holds:
\[
f(x + z e_i) - f(x) \geq f(y + z e_i) - f(y),
\]
where $e_i$ is the $i$-th standard basis vector. As shown in \cite{bian}, a function $f$ is submodular if and only if it satisfies the weak DR property.

A notable subclass of submodular functions is the class of DR-submodular functions (where ``DR'' stands for diminishing returns) \cite{bian,Soma}. A function $f$ is \emph{DR-submodular} if, for all $x, y \in \mathcal{X}$ such that $x \leq y$ coordinate-wise, any $i \in [n]$, and any non-negative scalar $z$ such that both $x + z e_i \in \mathcal{X}$ and $y + z e_i \in \mathcal{X}$, the following holds:
\[
f(x + z e_i) - f(x) \geq f(y + z e_i) - f(y).
\]
This property strengthens the weak DR condition and ensures that marginal gains diminish as the input grows.

The following lemma relates a function $f$ on a product domain to a set function $u$ defined as $u(S) = \mathbb{E}[f(X_S)]$ for independent random variables $X_1, \ldots, X_n$.

\begin{lemma}[Lemma~3 \cite{AN16}] 
Assuming that $f$ is a monotone submodular function, it follows that $u$ is a monotone submodular set function.
\label{lem:AN}
\end{lemma}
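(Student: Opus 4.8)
The plan is to reduce both assertions to pointwise inequalities in the realization of $(X_1,\ldots,X_n)$ and then integrate, using linearity and monotonicity of expectation. Throughout I would use the nonnegativity of item values, i.e.\ $X_i \ge 0$ almost surely, which is exactly what makes the zero-padding in the definition of $(X_i, i\in S)$ compatible with the coordinate-wise lattice operations $\wedge$ and $\vee$.

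For monotonicity, fix $S\subseteq T\subseteq\Omega$ and a realization $x\in\mathbb{R}_+^n$. Coordinate by coordinate, $(x_i, i\in S)$ equals $x_j$ on $j\in S$ and $0$ otherwise, while $(x_i, i\in T)$ equals $x_j$ on $j\in T\supseteq S$ and $0$ otherwise; since $x_j\ge 0$, this gives $(x_i, i\in S)\le (x_i, i\in T)$ coordinate-wise. Monotonicity of $f$ then yields $f((x_i, i\in S))\le f((x_i, i\in T))$ pointwise, and taking expectations gives $u(S)\le u(T)$.

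For submodularity, the key step is the identity, valid for every $x\in\mathbb{R}_+^n$ and all $S,T\subseteq\Omega$,
$$
(x_i, i\in S)\wedge (x_i, i\in T) = (x_i, i\in S\cap T), \qquad (x_i, i\in S)\vee (x_i, i\in T) = (x_i, i\in S\cup T),
$$
which I would verify coordinate-wise by splitting into the cases $j\in S\cap T$, $j\in S\setminus T$, $j\in T\setminus S$, $j\notin S\cup T$; nonnegativity of $x_j$ is precisely what guarantees $\min(x_j,0)=0$ in the second and third cases. Applying submodularity of $f$, inequality~(\ref{equ:submod}), to the vectors $(X_i,i\in S)$ and $(X_i,i\in T)$ and substituting this identity gives the pointwise bound $f((X_i,i\in S\cap T)) + f((X_i,i\in S\cup T)) \le f((X_i,i\in S)) + f((X_i,i\in T))$. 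Taking expectations of both sides and using linearity yields $u(S\cap T)+u(S\cup T)\le u(S)+u(T)$, which is submodularity of the set function $u$.

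I do not expect a serious obstacle here: the proof is essentially the observation that zero-padding is a lattice homomorphism from $(2^\Omega,\cap,\cup)$ into $(\mathbb{R}_+^n,\wedge,\vee)$ on each fixed realization. The only points warranting care are the coordinate-wise verification of the lattice identity — where nonnegativity of the $X_i$ is essential, not cosmetic — and, for full rigor, checking that $u(S)$ is finite so that the expectations and their manipulations are justified, which holds under the standing assumptions on $f$ and on $P_1,\ldots,P_n$.
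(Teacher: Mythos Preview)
Your argument is correct and is exactly the standard proof: zero-padding on $\mathbb{R}_+^n$ is a lattice homomorphism from $(2^\Omega,\cap,\cup)$ to $(\mathbb{R}_+^n,\wedge,\vee)$, so pointwise monotonicity and submodularity of $f$ transfer to $u$ under expectation. The coordinate-wise verification of the lattice identity, and the role of nonnegativity in forcing $\min(x_j,0)=0$, are both accurate and well placed.

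There is nothing to compare against in the paper itself: the lemma is quoted from \cite{AN16} (as ``Lemma~3'') and is not re-proved here. Your proof is precisely the one given in that reference, so there is no genuinely different route to discuss.
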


\subsubsection{Concave and Coordinate-Concave Functions}

A function $f$ is \emph{convex} on a domain $\mathcal{X}$ if, for all $x, y \in \mathcal{X}$ and $\lambda \in [0,1]$, 
\[
f(\lambda x + (1-\lambda)y) \leq \lambda f(x) + (1-\lambda) f(y).
\] 
The function is \emph{concave} if the inequality is reversed. Submodular functions may be concave, convex, or neither.

A function $f$ is \emph{coordinate-wise concave} if, for every $x \in \mathcal{X}$, index $i \in [n]$, and non-negative scalars $u, v$ such that $x + u e_i \in \mathcal{X}$, $x + v e_i \in \mathcal{X}$, and $x + (u + v) e_i \in \mathcal{X}$, it holds that
\[
f(x + u e_i) - f(x) \geq f(x + (u + v) e_i) - f(x + v e_i).
\]
If $f$ is twice-differentiable, coordinate-wise concavity is equivalent to $\partial^2 f(x)/\partial x_i^2 \leq 0$ for all $i \in [n]$ \cite{bian}. In the smooth setting, coordinate-wise concavity coincides with standard concavity along each coordinate. By \cite{bian}, $f$ is DR-submodular if and only if it is both submodular and coordinate-wise concave.

\subsubsection{Subadditive Functions}

A function $f$ is \emph{subadditive} if $f(x + y) \leq f(x) + f(y)$ for all $x$ and $y$ in its domain. Similarly, a set function $u$ is subadditive if $u(S \cup T) \leq u(S) + u(T)$ for all $S, T \subseteq \Omega$. Any non-negative submodular set function is also subadditive. Moreover, any DR-submodular function satisfying mild additional conditions is subadditive, as formalized below.

\begin{lemma}\label{lem:fact1} 
If $f$ is a DR-submodular function on $\mathcal{X} \subseteq \mathbb{R}_+^n$ with $0 \in \mathcal{X}$ and $f(0) \geq 0$, then $f$ is subadditive on $\mathcal{X}$.
\end{lemma}

Finally, we state a sufficient condition for the expected utility to be monotone and subadditive. 

\begin{lemma} 
Assume that $f$ is a monotone function that is either subadditive or submodular. Then, the set function $u$ is monotone and subadditive.
\label{lem:sub}
\end{lemma}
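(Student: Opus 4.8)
The plan is to verify the two required properties of $u$ separately, reducing each to a pointwise statement about $f$ evaluated along a random realization $X_1,\dots,X_n$ and then taking expectations; non-negativity of the item values (so that $(X_i,i\in S)$ lies in the domain of $f$) and of $f$ itself will be used implicitly throughout.

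First, monotonicity of $u$. Suppose $S\subseteq T$. For every realization of the item values, the vector $(X_i, i\in S)$ is dominated coordinate-wise by $(X_i, i\in T)$: the coordinates in $S$ agree, those in $T\setminus S$ move from $0$ up to $X_i\ge 0$, and the remaining coordinates are $0$ in both. Monotonicity of $f$ then gives $f((X_i,i\in S))\le f((X_i,i\in T))$ pointwise, and taking expectations yields $u(S)\le u(T)$.

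Next, subadditivity when $f$ is submodular. Fix $S,T\subseteq\Omega$ and apply the submodularity inequality (\ref{equ:submod}) with $x=(X_i,i\in S)$ and $y=(X_i,i\in T)$; note that $x\wedge y=(X_i,i\in S\cap T)$ and $x\vee y=(X_i,i\in S\cup T)$, so pointwise $f((X_i,i\in S\cap T))+f((X_i,i\in S\cup T))\le f((X_i,i\in S))+f((X_i,i\in T))$. Taking expectations shows that $u$ is a submodular set function — this is precisely Lemma~\ref{lem:AN} — and since $f$ is non-negative so is $u$; dropping the non-negative term $u(S\cap T)$ gives $u(S\cup T)\le u(S)+u(T)$. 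When $f$ is instead subadditive, the key pointwise observation is $(X_i,i\in S\cup T)\le (X_i,i\in S)+(X_i,i\in T)$ coordinate-wise: on $S\setminus T$ and $T\setminus S$ the two sides agree, on $S\cap T$ the left side is $X_i$ while the right side is $2X_i\ge X_i$, and elsewhere both are $0$. Applying monotonicity of $f$ and then its subadditivity gives $f((X_i,i\in S\cup T))\le f((X_i,i\in S)+(X_i,i\in T))\le f((X_i,i\in S))+f((X_i,i\in T))$ pointwise; taking expectations completes the proof.

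The argument is essentially routine, so there is no substantial obstacle; the only points requiring care are the handling of the overlap $S\cap T$ in the subadditive case — which is exactly where non-negativity of the $X_i$ together with monotonicity of $f$ are both needed to pass from $(X_i,i\in S\cup T)$ to the sum — and the appeal to non-negativity of $f$ in the submodular case, without which submodularity of $u$ as a set function would not yield its subadditivity.
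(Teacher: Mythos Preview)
Your proof is correct and follows essentially the same approach as the paper: the submodular case is handled via Lemma~\ref{lem:AN} (non-negative submodular set functions are subadditive), and the subadditive case uses the same pointwise inequality $f((X_i,i\in S\cup T))\le f((X_i,i\in S))+f((X_i,i\in T))$ obtained by combining monotonicity with subadditivity of $f$ and the non-negativity of the $X_i$. The only cosmetic difference is the order in which you apply monotonicity and subadditivity in the subadditive case; the paper applies subadditivity first and then monotonicity to drop the overlap term $\sum_{i\in S\cap T}x_ie_i$, whereas you bound $(X_i,i\in S\cup T)$ from above first.
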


\section{Algorithm and Approximation Guarantees}
\label{sec:algoapprox}

In this section, we present our algorithm for approximating an item's value distribution with a discrete distribution supported on a finite set, and we provide approximation guarantees for sketch valuation functions defined as expectations with respect to these discrete distributions. 

We assume that the valuation function $f$ and the item value distributions $P_1, \ldots, P_n$ satisfy the condition that $\mathbb{E}[f(X_i) \mid X_i > \tau]$ is finite for all $i \in \Omega$ and $\tau \in \mathbb{R}_+$ such that $P_i(\tau) < 1$. This condition is used to approximate the tail of an item's value distribution. For valuation functions where $f(x_i) = x_i$, this is equivalent to requiring that $\mathbb{E}[X_i]$ is finite for all $i \in \Omega$. Examples include the maximum value function $f(x) = \max\{x_1, \ldots, x_n\}$ and the constant elasticity of substitution (CES) function $f(x) = (x_1^r + \cdots + x_n^r)^{1/r}$ for $r > 0$.

\subsection{Algorithm}
\label{sec:algo}

\begin{algorithm}[t] 
	\caption{Distribution Discretization Algorithm}
	\begin{algorithmic}
        \REQUIRE $\epsilon \in (0,1]$, $a \in (0,1)$
	\STATE $\tau \leftarrow \inf\{x \in \mathbb{R} : P(x) \geq 1 - \epsilon\}$
        \STATE $J \leftarrow \lceil \log_{1/(1-\epsilon)}(1/a) \rceil$
        \STATE $Q(0) = P(a\tau)$
        \FOR{$j = 1, \ldots, J$}
            \IF{$j < J$} 
                \STATE $Q(a\tau/(1-\epsilon)^{j-1}) = P(a\tau/(1-\epsilon)^j)$
            \ELSE
                \STATE $Q(a\tau/(1-\epsilon)^{J-1}) = P(\tau)$
            \ENDIF
		\ENDFOR
        \IF{$P(\tau) < 1$}
            \STATE $H \leftarrow \mathbb{E}[f(X) \mid X > \tau]$
            \STATE $Q(f^{-1}(H)) = 1$
        \ENDIF
	\end{algorithmic}
 \label{alg:disc}
\end{algorithm}

Algorithm~\ref{alg:disc} outputs a discrete distribution $Q$ with finite support given value oracle access to a distribution $P$. For clarity, the output distribution $Q$ is specified at all its jump points, is constant between jumps, and is right-continuous.

The algorithm uses a binning scheme: values below a lower threshold are assigned to zero, values above an upper threshold are mapped to a fixed value, and the values in between are partitioned into bins of exponentially increasing width. Two parameters, $a \in (0,1)$ and $\epsilon \in (0,1]$, control the bin structure and balance approximation accuracy against computational efficiency. The parameter $a$ sets the lower bound of the bins: larger $a$ reduces the support size but loosens the upper bound on the approximation. The parameter $\epsilon$ governs the overall truncation of the binning scheme: smaller $\epsilon$ yields finer discretization but increases the support size.

The lower threshold is set to $a\tau$ and the upper threshold to $\tau$, where $\tau$ is the $(1-\epsilon)$-quantile of $P$. The bins, starting at $a\tau$, grow by a factor of $1/(1-\epsilon)$. Let $J$ be the smallest integer such that $a\tau / (1-\epsilon)^J \geq \tau$. The bin boundaries are then $x_j = a\tau/(1-\epsilon)^{j-1}$ for $j=1,\ldots,J$, with $x_{J+1} = \tau$. The distribution $Q$ reallocates the mass of $P$ as follows:
\begin{itemize}
    \item All mass on $[0, a\tau]$ is moved to $0$.
    \item All mass on $(\tau, \infty)$ is assigned to $f^{-1}(H)$, where $H = \mathbb{E}[f(X) \mid X > \tau]$, if $P(\tau) < 1$.
    \item For each $j=1,\ldots,J$, all mass on $(x_j, x_{j+1}]$ is transferred to $x_j$.
\end{itemize}
See Figure~\ref{fig:algoutput} for an illustration.

The support of $Q$ is
\[
\mathcal{Q} = \{0\} \cup \left\{a\tau, \frac{a\tau}{1-\epsilon}, \ldots, \frac{a\tau}{(1-\epsilon)^{J-1}}\right\} \cup \mathcal{Q}^*,
\]
where $\mathcal{Q}^* = \{f^{-1}(H)\}$ if $P(\tau) < 1$, and $\mathcal{Q}^* = \emptyset$ otherwise. The support size satisfies
\[
s = O\left(\frac{1}{\epsilon} \log(1/a)\right).
\]

\begin{figure}[htbp]
\centering

\subcaptionbox{Input distribution $P$\label{fig:algoutputp}}{
\includegraphics[width=0.35\textwidth]{./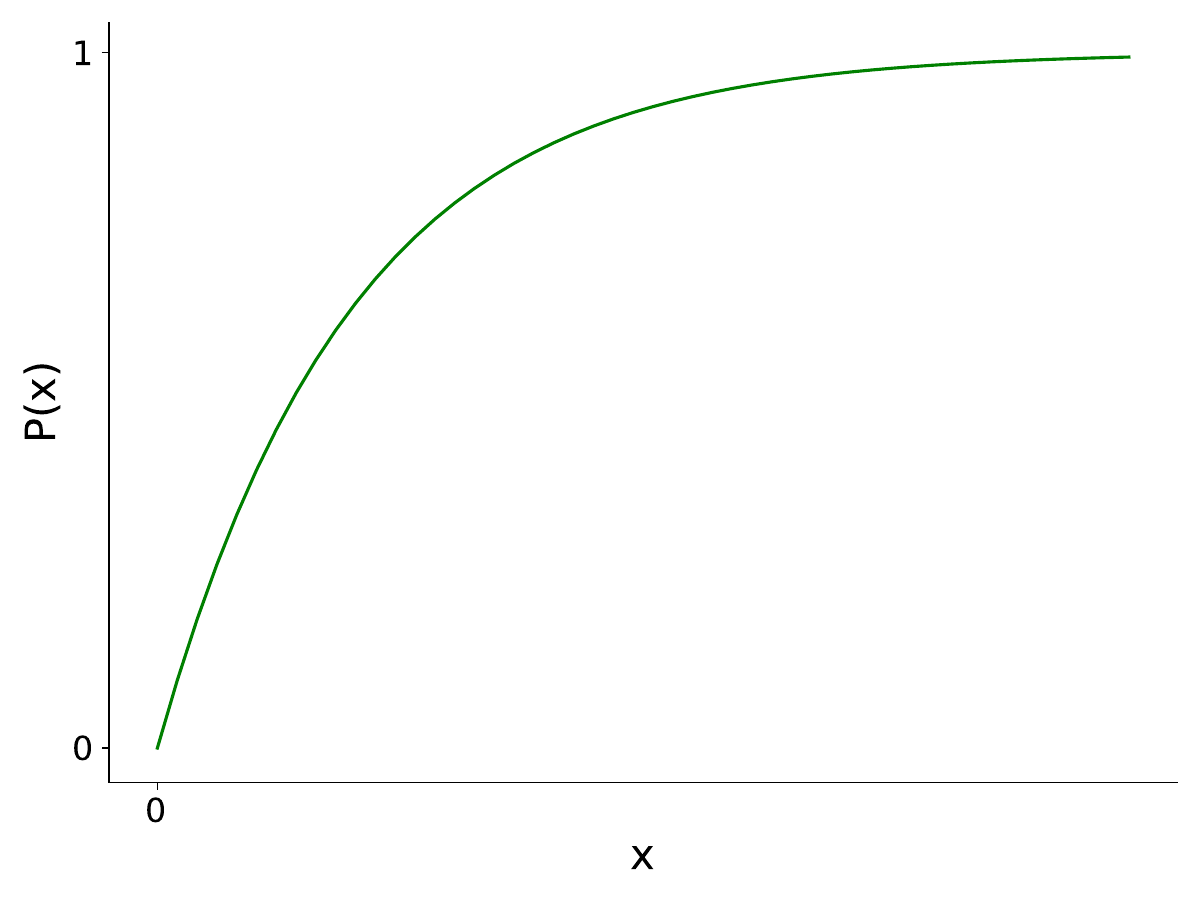}
}
\hspace{1.5cm}
\subcaptionbox{Output discretized distribution $Q$\label{fig:algoutputq}}{
\includegraphics[width=0.35\textwidth]{./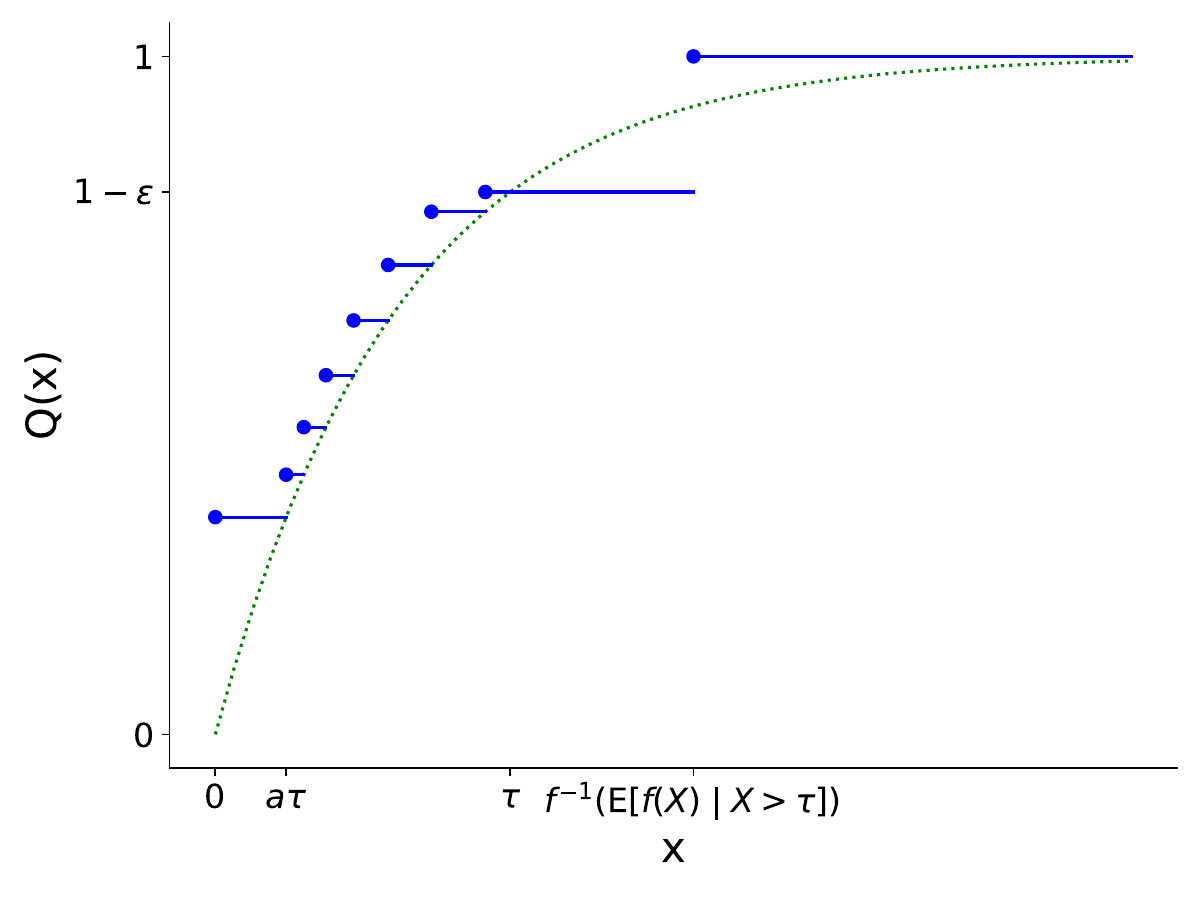}
}

\caption{Illustration of the distribution discretization by Algorithm~\ref{alg:disc}.}
\label{fig:algoutput}

\end{figure}

The algorithm guaranties the following relationships between $P$ and $Q$: 
(i) $Q(x) \geq P(x) - \epsilon$ for all $x$, 
(ii) $Q(x) \geq P(x)$ for all $x \leq \tau$, and 
(iii) $Q(x) \leq P(x) + \epsilon$ for all $x \geq \tau$. These properties are straightforward to verify; detailed proofs are provided in the online companion.

The algorithm requires computing: 
(P1) values of $P$ at $O((1/\epsilon)\log(1/a))$ points, 
(P2) the conditional expectation $\E[f(X)\mid X>\tau]$ if $P(\tau)<1$, and 
(P3) the $(1-\epsilon)$-quantile $\tau$. If $P$ is a discrete distribution with finite size support $m$, evaluating $P(x)$ for all $x$ takes $O(m)$ time, as does computing $\E[f(X)\mid X>\tau]$. Computing $\tau$ can be done in $O(m \log m)$ time by sorting the elements and selecting the appropriate rank. Importantly, these computations are performed only once per item, whereas directly evaluating $u(S)$ for any set $S$ of size at most $k$ requires $O(m^k)$ time, which becomes prohibitive even for small $k$ when $m$ is large.

\subsection{Approximation Guarantees}
\label{sec:approx}

We establish approximation guaranties for the set function 
$$
u(S) = \mathbb{E}[f(X_S)],
$$ 
using a sketch set function 
$$
v(S) = \mathbb{E}[f(Y_S)].
$$ 
Here, $X_1,\ldots, X_n$ are independent random variables with distributions $P_1, \ldots, P_n$, and $Y_1, \ldots, Y_n$ are independent random variables with distributions $Q_1,\ldots, Q_n$, where each $Q_i$ is obtained by applying Algorithm~\ref{alg:disc} to $P_i$. 

We assume that the distributions $P_1, \ldots, P_n$ have all atoms (if any) of mass at most $\Delta \in [0,1]$, i.e.,
\begin{equation}
P_i(x) - \lim_{z \uparrow x} P_i(z) \leq \Delta, \quad \forall x \in \mathbb{R}, \ \forall i \in \Omega.
\label{equ:picond}
\end{equation}
In fact, it suffices for \eqref{equ:picond} to hold only at $x = \tau_i$, the $(1-\epsilon)$-quantile of $P_i$. If each $P_i$ is continuous and strictly increases on its support, then $\Delta = 0$.  

\subsubsection{Weakly Homogeneous Functions} 
\label{sec:weak}

We consider functions that satisfy a weak homogeneity condition. Recall that $f$ is homogeneous of degree $d$ over a set $\Theta \subseteq \mathbb{R}$ if $f(\theta x) = \theta^d f(x)$ for all $x$ and $\theta \in \Theta$. We use a relaxed version:

\begin{definition} 
A function $f$ is \emph{weakly homogeneous of degree $d$ with tolerance $\eta$ over a set $\Theta \subseteq \mathbb{R}$} if
$$
\frac{1}{\eta}\, \theta f(x) \leq f(\theta x) \leq \theta^d f(x), \quad \forall x, \forall \theta \in \Theta.
$$
\label{def:wh}
\end{definition}

\begin{table}
\centering
\caption{Properties of some functions $f$. Note $x_{(i)}$ denotes the $i$th largest element of $x_1,\ldots,x_n$.}
\label{tab:fprop}
\begin{tabular}{c c c c c c c}
\hline
$f(x)$
& subadditive
& submodular
& convex
& concave
& $d$
& $\eta$ \\
\hline
$\max\{x_1,\ldots,x_n\}$
& \checkmark
& \checkmark
& \checkmark
&
& $1$
& $1$ \\
$x_{(1)}+\cdots+x_{(h)}$
& \checkmark
& \checkmark
& \checkmark
&
& $1$
& $1$ \\
$(\sum_{i=1}^n x_i^r)^{1/r}$, $r\ge 1$
& \checkmark
& \checkmark
& \checkmark
&
& $1$
& $1$ \\
$g(\sum_{i=1}^n x_i)$, concave $g$
& \checkmark
& \checkmark
&
& \checkmark
& min elasticity of $g$
& $1$ \\
$1-\prod_{i=1}^n(1-x_i)$
& \checkmark
& \checkmark
&
&
& $\le 1/2$, $n\ge 2$
& $1$ \\
\hline
\end{tabular}
\end{table}

Many functions are weakly homogeneous with positive degree and $\eta = 1$. Table~\ref{tab:fprop} provides examples. For a differentiable function $g:\mathbb{R}\to \mathbb{R}$, the elasticity at $z$ is $zg'(z)/g(z)$. 

\begin{theorem} 
\label{DR} 
Assume $f$ is monotone, subadditive or submodular, and weakly homogeneous with degree $d$ and tolerance $\eta$ over $[0,1]$, and let $\epsilon \in (\Delta,1)$. Then, for every set $S \subseteq \Omega$ with $|S| \leq k$,
$$
\frac{1}{2}(1-\epsilon)^{k-1}(1-\Delta/\epsilon) v(S) \leq u(S) \leq 2\eta \frac{1 + a^d k / \epsilon - \Delta/\epsilon}{(1-\epsilon)^k (1-\Delta/\epsilon)^2} v(S).
$$
\end{theorem}

The approximation factors depend on $d$, $\eta$, the discretization parameters $a$ and $\epsilon$, the set size $k$, and $\Delta$. The lower bound holds for any monotone subadditive or submodular $f$, independent of $d$ and $\eta$, and arises from bounding the upper ends of item value distributions. The lower and middle parts of the distribution are transformed monotonically, making the discretized distribution stochastically smaller up to the upper threshold. 

The factors $(1/2)(1-\epsilon)^{k-1}$ and $2/(1-\epsilon)^{k-1}$ come from upper-end transformations. The term $(1 + a^d k - \Delta/\epsilon)/(1-\Delta/\epsilon)$ arises from lower-end transformations, and $\eta/(1-\Delta/\epsilon)$ from the middle part. The upper bound increases with $d$ and $\eta$. 

\begin{corollary} 
\label{cor:cf}
Assume $d>0$ and $\Delta k < 1$. Under Theorem~\ref{DR}'s conditions, set $a = \epsilon^{2/d}$ and $\epsilon = c/k$ for $c \in (\Delta k, 1)$. Then, for all $S \subseteq \Omega$ with $|S| \leq k$,
$$
\frac{1}{2} e^{-\frac{c}{1-c}} (1-\Delta k/c) v(S) \leq u(S) \leq 2\eta e^{\frac{c}{1-c}} \frac{1+c}{(1-\Delta k/c)^2} v(S).
$$
\end{corollary}

Algorithm~\ref{alg:disc} produces an $\alpha$-sketch on ${\mathcal F}_k$ with
$$
\alpha = 4\eta \frac{1+c}{(1-\Delta k/c)^3} e^{2c/(1-c)}.
$$

When $\Delta = 0$, $\alpha$ can be arbitrarily close to $4\eta$ by choosing $c$ sufficiently small. Figure~\ref{fig:approx} illustrates $c \mapsto (1+c) e^{2c/(1-c)}$. If $\Delta = o(1/k)$, each discretized distribution has at most a support size
$$
s = O\left(\frac{1}{d} k \log k \right),
$$
which is $O(k \log k)$ if $d$ is bounded below by a positive constant.

\begin{figure}[htbp]
\centering
\includegraphics[width=0.35\linewidth]{./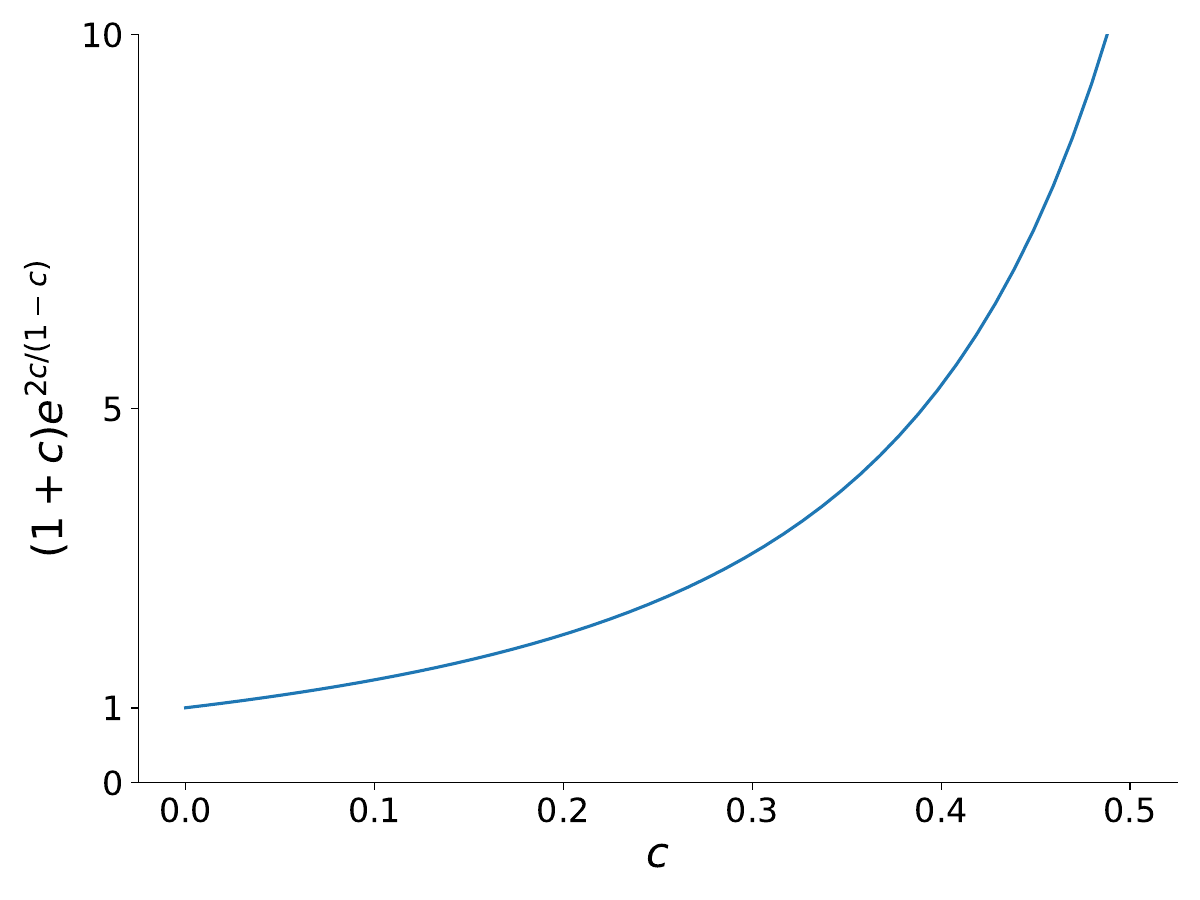}
\caption{Approximation factor $(1+c)e^{2c/(1-c)}$ versus $c$.}
\label{fig:approx}
\end{figure}

\paragraph{Proof of Theorem~\ref{DR}}
We outline the main steps of the proof, leaving the proofs of some lemmas to the online companion. Recall that Algorithm~\ref{alg:disc} performs three transformations: limiting the upper end of the support, limiting the lower end of the support, and applying exponential binning to the middle part. The proof proceeds through these steps and establishes the effect of each on the approximation factors.

We first introduce the notation used to define the aforementioned transformations. Let $\tau_i$ denote the $(1-\epsilon)$-quantile of distribution $P_i$, i.e., 
$$
\tau_i = \inf\{x\in \mathbb{R}: P_i(x)\geq 1-\epsilon\},
$$
where $\epsilon \in (0,1]$. 

The algorithm limits the upper end of the support of each distribution $P_i$. Define 
$$
H_i = 
\begin{cases} 
\E[f(X_i) \mid X_i>\tau_i], & \text{if } P_i(\tau_i) < 1\\
0, & \text{otherwise}
\end{cases}.
$$
Let $\hat{X}_i$ be a random variable equal to $X_i$ if $X_i\leq \tau_i$, and equal to $f^{-1}(H_i)$ otherwise. Here, $f^{-1}$ denotes the inverse of $f(x,0,\ldots,0)$ with respect to $x$. The support of $\hat{X}_i$ is contained in 
$$
[0, \tau_i] \cup \{f^{-1}(H_i)\}.
$$

The algorithm limits the lower end of the support by mapping values $\hat{X}_i \leq a \tau_i$ to $0$. Define 
$$
\tilde{X}_i = 
\begin{cases} 
\hat{X}_i, & \text{if } \hat{X}_i > a \tau_i\\
0, & \text{otherwise}.
\end{cases}.
$$
Then $\tilde{X}_i$ has support 
$$
\{0\} \cup [a \tau_i, \tau_i] \cup \{f^{-1}(H_i)\}.
$$

Each $\tilde{X}_i$ is then transformed into $Y_i$ by exponential binning over $[a\tau_i, \tau_i]$. Let $J=\lceil \log_{1/(1-\epsilon)}(1/a)\rceil$. Define the quantization function
$$
q(x; \tau, \epsilon, a) = \frac{1}{(1-\epsilon)^{j-1}}a\tau, \quad \text{for } x\in I_j(\tau,\epsilon,a), 1\leq j \leq J
$$
and $q(x;\tau,\epsilon,a)=x$ for $x\geq \tau$, where
$$
I_j(\tau,\epsilon,a) = \left(\frac{1}{(1-\epsilon)^{j-1}}a\tau,\frac{1}{(1-\epsilon)^{j}}a\tau\right], \quad 1\leq j < J,
$$
and
$$
I_J(\tau,\epsilon,a) = \left(\frac{1}{(1-\epsilon)^{J-1}}a\tau,\tau\right].
$$
Finally, define 
$$
Y_i = q(\tilde{X}_i; \tau_i, \epsilon, a).
$$

Assuming that $P_i(x)-\lim_{z\uparrow x} P_i(z)\leq \Delta$ for all $x\in \mathbb{R}$ and $i \in \Omega$, we have
$$
1-\epsilon \leq \Pr(X_i \leq \tau_i) \leq 1-\epsilon + \Delta.
\label{equ:pi}
$$

For the upper end, each item is divided into components depending on whether $X_i>\tau_i$. If $P_i(\tau_i) < 1$, then by the definition of $\hat{X}_i$,
$$
\E[f(\hat{X}_i)\mid \hat{X}_i>\tau_i] = \E[f(X_i) \mid X_i>\tau_i].
$$

Let $X_i^- := X_i \ind{X_i \le \tau_i}, \quad i \in [n].$, and
$$
w(S) = \E\left[f\left(X_S^-\right)\right].
$$

\begin{lemma} \label{upper-end-f}
Assume $f$ is monotone and subadditive or submodular. For every $S\subseteq \Omega$ with $|S| \leq k$,
$$
u(S) \geq (1-\epsilon)^{k-1} (1-\Delta/\epsilon) \max\left\{ \epsilon \sum_{i \in S} H_i, w(S) \right\},
$$
and
$$
u(S) \leq 2 \max\left\{ \epsilon \sum_{i \in S} H_i, w(S) \right\}.
$$
\end{lemma}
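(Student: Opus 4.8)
The plan is to split each item's value into a \emph{low} part $X_i^- := X_i\ind_{\{X_i\le\tau_i\}}$ and a \emph{high} part $X_i^+ := X_i\ind_{\{X_i>\tau_i\}}$, so that $X_i=X_i^-+X_i^+$ and, on every realization, the vectors $(X_i^-,i\in S)$ and $(X_i^+,i\in S)$ have disjoint supports, while also $(X_i^-,i\in S)\le (X_i,i\in S)$ coordinate-wise. The one elementary fact I would record up front is that if $f$ is monotone and either subadditive or submodular, then for vectors $x,y$ in its domain with disjoint supports one has $f(x+y)\le f(x)+f(y)$: for subadditive $f$ this is the definition, and for submodular $f$ it follows from $x\vee y=x+y$, $x\wedge y=0$, and $f(0)\ge 0$. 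Iterating, $f\big(\sum_i z_ie_i\big)\le\sum_i f(z_ie_i)$ whenever the $z_ie_i$ have pairwise disjoint supports. I would also use \eqref{equ:pi}, which gives $1-\epsilon\le\Pr(X_i\le\tau_i)$ and $\epsilon-\Delta\le\Pr(X_i>\tau_i)\le\epsilon$, and the definition $H_i=\E[f(X_ie_i)\mid X_i>\tau_i]$.

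For the lower bound I would prove $u(S)\ge(1-\epsilon)^{k-1}(1-\Delta/\epsilon)\,w(S)$ and $u(S)\ge(1-\epsilon)^{k-1}(1-\Delta/\epsilon)\,\epsilon\sum_{i\in S}H_i$ separately; since the prefactor is at most $1$ (for $k\ge1$), the stated bound with the maximum follows. The first inequality is immediate from $(X_i^-,i\in S)\le(X_i,i\in S)$ and monotonicity, giving $w(S)\le u(S)$. For the second, for each $i\in S$ let $B_i$ be the event $\{X_i>\tau_i\}\cap\bigcap_{j\in S\setminus\{i\}}\{X_j\le\tau_j\}$. These events are pairwise disjoint, and since $(X_j,j\in S)\ge X_ie_i$ coordinate-wise, monotonicity and $f\ge0$ give
$$
u(S)=\E[f((X_j,j\in S))]\ \ge\ \sum_{i\in S}\E\big[f(X_ie_i)\ind_{B_i}\big].
$$
By independence, $\E[f(X_ie_i)\ind_{B_i}]=\E[f(X_ie_i)\ind_{\{X_i>\tau_i\}}]\prod_{j\in S\setminus\{i\}}\Pr(X_j\le\tau_j)=\Pr(X_i>\tau_i)H_i\prod_{j\in S\setminus\{i\}}\Pr(X_j\le\tau_j)\ge(\epsilon-\Delta)(1-\epsilon)^{k-1}H_i$, using \eqref{equ:pi} and $|S|\le k$. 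Summing over $i\in S$ and writing $\epsilon-\Delta=\epsilon(1-\Delta/\epsilon)$ gives the claim.

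For the upper bound I would argue pointwise: on a given realization put $T=\{i\in S:X_i>\tau_i\}$, so that $(X_j,j\in S)=(X_j^-,j\in S)+\sum_{i\in T}X_ie_i$ with $(X_j^-,j\in S)$ supported on $S\setminus T$; hence the summands have pairwise disjoint supports, and the iterated subadditivity fact yields $f((X_j,j\in S))\le f((X_j^-,j\in S))+\sum_{i\in T}f(X_ie_i)=f((X_j^-,j\in S))+\sum_{i\in S}f(X_ie_i)\ind_{\{X_i>\tau_i\}}$ (trivially an equality when $T=\emptyset$). Taking expectations and using $\E[f(X_ie_i)\ind_{\{X_i>\tau_i\}}]=\Pr(X_i>\tau_i)H_i\le\epsilon H_i$ gives $u(S)\le w(S)+\epsilon\sum_{i\in S}H_i\le 2\max\{w(S),\epsilon\sum_{i\in S}H_i\}$.

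The only real obstacle is treating the two function classes uniformly: submodularity alone does not give $f(x+y)\le f(x)+f(y)$, and the whole argument rests on the observation that the low/high split produces disjoint-support vectors, for which submodularity does deliver that inequality. A secondary point requiring a little care in the upper bound is choosing $T$ per realization so that no spurious $f(0)$ terms are introduced; keeping the decomposition aligned with the coordinates that are actually "high" handles this.
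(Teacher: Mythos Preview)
Your proposal is correct and follows essentially the same route as the paper: split each realization into a low part and a high part (the paper phrases this via the random set $T=\{i\in S:X_i>\tau_i\}$), use disjoint-support subadditivity for the upper bound, and the single-high-item events for the lower bound. If anything, your explicit justification of $f(x+y)\le f(x)+f(y)$ for disjoint-support $x,y$ in the submodular case (via $x\vee y=x+y$, $x\wedge y=0$, $f(0)\ge 0$) is cleaner than the paper's, which invokes subadditivity of the set function $u$ at a step where a pointwise property of $f$ is what is actually being used.
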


The proof is in the online companion. Using this, we compare 
$$
v_1(S) = \E[f(\hat{X}_S)] \quad \text{with} \quad u(S) = \E[f(X_S)].
$$

\begin{lemma}\label{uv1}
Assume $f$ is monotone and subadditive or submodular. Then, for $S\subseteq \Omega$ with $|S|\leq k$,
$$
\frac{1}{2}(1-\epsilon)^{k-1} (1-\Delta/\epsilon) v_1(S) \leq u(S) \leq 2 \frac{1}{(1-\epsilon)^{k-1}} (1-\Delta/\epsilon)^{-1} v_1(S).
$$
\end{lemma}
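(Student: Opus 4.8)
The plan is to invoke Lemma~\ref{upper-end-f} twice---once for the original variables $X_i$ and once for the truncated variables $\hat X_i$---and then observe that the two resulting two-sided estimates are sandwiched between the \emph{same} pair of quantities, so that $u(S)$ and $v_1(S)$ are within a constant factor of each other.

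First I would record the structural facts about $\hat X_i$ that make Lemma~\ref{upper-end-f} applicable with $X_i$ replaced by $\hat X_i$. Since $f$ is monotone, $H_i = \E[f(X_i)\mid X_i>\tau_i]\ge f(\tau_i)$ (writing $f(\cdot)$ for $f(\cdot,0,\dots,0)$), hence $f^{-1}(H_i)\ge\tau_i$, so that $\{\hat X_i\le\tau_i\}=\{X_i\le\tau_i\}$ and $\hat X_i=X_i$ on this event. Three consequences follow: (a) $\Pr(\hat X_i\le\tau_i)=\Pr(X_i\le\tau_i)$, so the bound (\ref{equ:pi}) holds for $\hat X_i$ with the same $\tau_i$; (b) $\E[f(\hat X_i)\mid \hat X_i>\tau_i]=f(f^{-1}(H_i))=H_i$, so the ``$H_i$'' quantity is unchanged; (c) $\hat X_i\ind_{\{\hat X_i\le\tau_i\}}=X_i\ind_{\{X_i\le\tau_i\}}$, so $w(S)=\E[f((\hat X_i\ind_{\{\hat X_i\le\tau_i\}},i\in S))]$, i.e.\ the ``$w$'' quantity is also unchanged. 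Since the proof of Lemma~\ref{upper-end-f} only uses (\ref{equ:pi}) together with the definitions of $H_i$ and $w$, and all of these are preserved under $X_i\mapsto\hat X_i$, the lemma applies verbatim with $u$ replaced by $v_1$.

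Next, set $M:=\max\{\epsilon\sum_{i\in S}H_i,\ w(S)\}$ and $c:=(1-\epsilon)^{k-1}(1-\Delta/\epsilon)$. Applying Lemma~\ref{upper-end-f} to the $X_i$ gives $cM\le u(S)\le 2M$, and applying it to the $\hat X_i$ gives $cM\le v_1(S)\le 2M$, for every $S\subseteq\Omega$ with $|S|\le k$. Combining, from $u(S)\le 2M$ and $v_1(S)\ge cM$ we obtain $u(S)\le (2/c)\,v_1(S)=2\frac{1}{(1-\epsilon)^{k-1}}(1-\Delta/\epsilon)^{-1}v_1(S)$; from $u(S)\ge cM$ and $v_1(S)\le 2M$ we obtain $u(S)\ge (c/2)\,v_1(S)=\frac12(1-\epsilon)^{k-1}(1-\Delta/\epsilon)\,v_1(S)$. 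These are exactly the two inequalities in the statement.

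The only point that needs care is the reduction in the second paragraph, namely checking that $\hat X_i$ inherits precisely the hypotheses and auxiliary quantities ($\tau_i$, $H_i$, $w$) that Lemma~\ref{upper-end-f} relies on; once items (a)--(c) are verified, the rest is immediate, so I do not anticipate a genuine obstacle here.
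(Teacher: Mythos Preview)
Your proposal is correct and follows essentially the same approach as the paper: both apply Lemma~\ref{upper-end-f} to $u$ and to $v_1$ and then compare via the common pivot $M=\max\{\epsilon\sum_{i\in S}H_i,\,w(S)\}$. You are more explicit than the paper in verifying items (a)--(c) that justify re-applying Lemma~\ref{upper-end-f} to the $\hat X_i$'s (the paper only records $\E[f(\hat X_i)\mid \hat X_i>\tau_i]=H_i$ and otherwise proceeds directly), but the logic is identical.
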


For the lower end, let 
$$
v_2(S) = \E[f(\tilde{X}_S)].
$$

\begin{lemma} 
Assume $f$ is monotone, subadditive or submodular, and weakly homogeneous in degree $d$ over $[0,1]$. For $S\subseteq \Omega$ with $|S|\leq k$,
$$
v_2(S) \geq \frac{1}{1 + a^d k/(\epsilon - \Delta)} v_1(S).
$$
\label{lem:v2v1ak}
\end{lemma}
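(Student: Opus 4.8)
\emph{Proof plan.} The plan is to decompose each $\hat X_i$ into the part that survives the lower truncation and the part that is zeroed out, and then exploit subadditivity of $f$ across disjoint coordinate supports. Writing $R_i := \hat X_i\,\ind_{\{\hat X_i < a\tau_i\}}$, we have $\hat X_i = \tilde X_i + R_i$ coordinate-wise, and for each $i$ at most one of $\tilde X_i,\ R_i$ is nonzero; hence for any $S$ the nonnegative vectors $\tilde X_S := (\tilde X_i, i\in S)$ and $R_S := (R_i, i\in S)$ have disjoint sets of nonzero coordinates. First I would record the pointwise bound $f(\hat X_S) \le f(\tilde X_S) + f(R_S)$: when $f$ is subadditive this is immediate, and when $f$ is submodular it follows by applying the submodular inequality to $\tilde X_S$ and $R_S$, whose coordinate-wise minimum is $0$ and coordinate-wise maximum is $\hat X_S$, together with $f(0)\ge 0$ (the same disjoint-support argument as in the proof of Lemma~\ref{lem:sub}). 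Taking expectations yields $v_1(S) \le v_2(S) + \E[f(R_S)]$, so the lemma reduces to bounding $\E[f(R_S)]$ by $\tfrac{a^d k}{\epsilon-\Delta}\,v_2(S)$.

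To bound $\E[f(R_S)]$, I would first split coordinate-wise: since the vectors $R_i e_i$, $i\in S$, have pairwise disjoint supports, iterating the subadditive/submodular inequality gives $f(R_S) \le \sum_{i\in S} f(R_i e_i)$. Because $R_i \le a\tau_i$, monotonicity gives $f(R_i e_i) \le f\big(a\cdot(\tau_i e_i)\big)$, and the upper half of weak homogeneity with $\theta = a \in [0,1]$ gives $f\big(a\cdot(\tau_i e_i)\big) \le a^d f(\tau_i e_i)$. Finally, conditioning on $\{X_i > \tau_i\}$ and using monotonicity, $f(\tau_i e_i) \le \E[f(X_i e_i)\mid X_i>\tau_i] = H_i$. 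Combining these, $\E[f(R_S)] \le a^d \sum_{i\in S} H_i$.

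It then remains to compare $\sum_{i\in S} H_i$ with $v_2(S)$, for which I would show $v_2(S) \ge (\epsilon-\Delta)\,H_i$ for every $i\in S$. On the event $\{X_i>\tau_i\}$ we have $\hat X_i = f^{-1}(H_i)$, and since $f(\tau_i e_i)\le H_i$ this forces $f^{-1}(H_i)\ge \tau_i \ge a\tau_i$, so $\tilde X_i = \hat X_i$ and $f(\tilde X_i e_i) = H_i$ on that event; by monotonicity of $f$, $v_2(S) \ge \E[f(\tilde X_i e_i)] \ge \Pr(X_i>\tau_i)\,H_i \ge (\epsilon-\Delta)H_i$, where $\Pr(X_i>\tau_i)\ge \epsilon-\Delta$ comes from~(\ref{equ:pi}). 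Summing over the at most $k$ indices of $S$ gives $(\epsilon-\Delta)\sum_{i\in S}H_i \le k\,v_2(S)$, hence $\E[f(R_S)] \le \tfrac{a^d k}{\epsilon-\Delta}v_2(S)$. Plugging back, $v_1(S) \le \big(1+\tfrac{a^d k}{\epsilon-\Delta}\big)v_2(S)$, which rearranges to the stated inequality.

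The one step that deserves care is the pointwise decomposition $f(\hat X_S)\le f(\tilde X_S)+f(R_S)$ in the submodular case: this is not subadditivity of $f$ in general, only across disjoint coordinate supports, and it relies on $f(0)\ge 0$; everything after that is monotonicity, weak homogeneity, and the quantile estimate~(\ref{equ:pi}). I would also flag the standing assumption $\epsilon>\Delta$ inherited from the setting of Theorem~\ref{DR}, which is what makes the factor $1/(\epsilon-\Delta)$ meaningful.
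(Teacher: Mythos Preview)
Your proof is correct and shares the paper's overall architecture: split $\hat X_i$ into $\tilde X_i$ plus a small residual, bound $f$ of the residual via weak homogeneity, and compare that bound to $v_2(S)$. The difference is in the intermediate quantity you pivot through. The paper keeps the residual as a full vector, bounding it by $f((a\tau_i,i\in S))\le a^d f((\tau_i,i\in S))$, and then shows $v_2(S)\ge \tfrac{\epsilon-\Delta}{k}\,f((\tau_i,i\in S))$ by choosing $j=\arg\max_{i\in S}\tau_i$, using $\Pr(\tilde X_j>\tau_j)\ge \epsilon-\Delta$, and a subadditivity/monotonicity step to pass from $f(\tau_j e_j)$ to $\tfrac{1}{k}f((\tau_i,i\in S))$. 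You instead split the residual coordinate-wise, bound each piece by $a^d f(\tau_i e_i)\le a^d H_i$, and then show $v_2(S)\ge(\epsilon-\Delta)H_i$ for every $i\in S$ by conditioning on $\{X_i>\tau_i\}$ (where $\tilde X_i=f^{-1}(H_i)$). Your route is a touch more elementary: it uses only single-coordinate events and avoids the max-quantile comparison, at the cost of invoking weak homogeneity $|S|$ times on one-coordinate vectors rather than once on the full vector. Both arrive at the same inequality.
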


For the middle part, each bin maps points to at least a $(1-\epsilon)$ fraction of the original value. Thus, the multiplicative loss is at most $(1-\epsilon)$. Let 
$$
v(S) = \E[f(Y_S)].
$$

\begin{lemma}\label{middle} 
Assume $f$ is monotone and weakly homogeneous with tolerance $\eta$. Then,
$$
v(S) \leq \frac{1-\epsilon}{\eta} v_2(S).
$$
\end{lemma}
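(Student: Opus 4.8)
The plan is to compare $v(S)=\E[f((Y_i,i\in S))]$ with $v_2(S)=\E[f((\tilde X_i,i\in S))]$ by first pinning down a deterministic, coordinate-wise relation between the quantized value $Y_i=q(\tilde X_i;\tau_i,\epsilon,a)$ and the pre-quantization value $\tilde X_i$, and then integrating. Since both set functions are expectations of $f$ evaluated on vectors that agree off $S$ and differ on $S$ only through the quantization map $q$, it suffices to control $f((Y_i,i\in S))$ against $f((\tilde X_i,i\in S))$ for each realization and then take expectations over the independent $\tilde X_i$.

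First I would read off the action of $q$ from the bins $I_j(\tau_i,\epsilon,a)$. Because $q$ sends every value in a bin to that bin's lower boundary and consecutive boundaries differ by the factor $1/(1-\epsilon)$, any $\tilde X_i$ landing in a bin obeys $(1-\epsilon)\tilde X_i\le Y_i\le \tilde X_i$, while coordinates with $\tilde X_i=0$ give $Y_i=0$ and satisfy the same relation trivially. This sandwiches each quantized coordinate within a factor $1-\epsilon$ of the original, and the bin ratio $1-\epsilon$ is precisely the scale at which weak homogeneity will be applied.

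Second I would lift this coordinate-wise sandwich to a comparison of the function values. Monotonicity of $f$ applied to $Y_i\le \tilde X_i$ gives the crude comparison between $f((Y_i,i\in S))$ and $f((\tilde X_i,i\in S))$; the refinement to the constant $\frac{1-\epsilon}{\eta}$ comes from weak homogeneity at the common scale $\theta=1-\epsilon$. Concretely, I would combine the half $(1-\epsilon)\tilde X_i\le Y_i$ of the sandwich with the weak-homogeneity inequality $(1/\eta)\theta f(x)\le f(\theta x)$ at $\theta=1-\epsilon$, so that a single invocation of weak homogeneity converts the coordinate scaling by $1-\epsilon$ into the factor $\frac{1-\epsilon}{\eta}$ relating the two function values. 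Taking expectations then transfers the deterministic comparison to the stated inequality between $v(S)$ and $v_2(S)$.

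The step I expect to be the main obstacle is fixing the orientation of the inequality. The quantization biases every coordinate strictly downward, so the only deterministic relation available is the two-sided sandwich $(1-\epsilon)\tilde X_i\le Y_i\le \tilde X_i$, and one must be careful which half of the weak-homogeneity sandwich $(1/\eta)\theta f(x)\le f(\theta x)\le \theta^d f(x)$ to invoke and at which argument; the orientation is ultimately dictated by the downstream assembly of Theorem~\ref{DR}, where the comparison is consumed in the form that trades a factor $\frac{\eta}{1-\epsilon}$ when passing from $v_2(S)$ to $v(S)$. I would also check the two configurations that sit outside the binned interval $[a\tau_i,\tau_i]$ --- coordinates mapped to $0$, and the tail atom at $f^{-1}(H_i)$, which may exceed $\tau_i$ --- to confirm that the coordinate bracketing, and hence the constant $\frac{1-\epsilon}{\eta}$, is valid over the full support of $\tilde X_i$.
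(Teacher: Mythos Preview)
Your approach is essentially identical to the paper's: establish the coordinate-wise bracket $(1-\epsilon)\tilde X_i\le Y_i$ from the bin geometry, apply monotonicity of $f$ to get $v(S)\ge \E[f(((1-\epsilon)\tilde X_i,i\in S))]$, and then invoke the tolerance side of weak homogeneity at $\theta=1-\epsilon$ to obtain $v(S)\ge \frac{1-\epsilon}{\eta}\,v_2(S)$. You are also right to flag the orientation: the lemma as stated has the inequality reversed (the paper's own proof in fact shows $v(S)\ge \frac{1-\epsilon}{\eta}v_2(S)$, and that is the direction used in assembling the upper bound of Theorem~\ref{DR}); your checks on the zero coordinates and the tail atom $f^{-1}(H_i)$ are the only additions beyond the paper's terse argument.
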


Theorem~\ref{DR} follows by combining these results: the lower bound from Lemma~\ref{uv1}, and the upper bound from Lemmas~\ref{lem:v2v1ak} and \ref{middle}.

\paragraph{Approximation results for distributions with arbitrary point masses} The result in Theorem~\ref{DR} is derived under the assumption that, for every item value distribution, any atom (if present) has mass at most $\Delta$, with $\Delta < \epsilon < 1$. Corollary~\ref{cor:cf} holds under the same assumption and further requires $\Delta < c/k$ for some positive constant $c$. Moreover, a constant-factor approximation guarantee is achieved when $\Delta = o(1/k)$. Hence, these results are valid only when $\Delta$ is sufficiently small relative to either $\epsilon$ or $1/k$. This restriction can, however, be alleviated by redefining the sketch valuation function as follows. 

For any given $\Delta \in (0,1)$, each item value distribution can be represented as a mixture of distributions: one in which every atom (if any) has mass at most $\Delta$, and the others being point mass distributions. The number of such point mass components is at most $1/\Delta$. This decomposition enables the stochastic valuation function to be expressed as a weighted sum of stochastic valuation functions, with the weights corresponding to the probabilities of the mixture components. It is sufficient to discretize only the non-atomic mixture components using Algorithm~\ref{alg:disc}. The redefined sketch valuation function is then constructed by approximating each stochastic valuation function in the weighted sum with a corresponding sketch valuation function. This technique extends the approximation results in Theorem~\ref{DR}, Corollary~\ref{cor:cf}, and subsequent approximation results to accommodate item value distributions with arbitrary jump sizes. 

The above approach increases the representation size of each item by up to $1/\Delta$ elements (corresponding to the point mass distributions). The computational complexity of evaluating the sketch valuation function is upper bounded by 
$$
\left(\frac{1}{\Delta} + 1\right)^k s^k,
$$ 
where $s$ is an upper bound on the support size of each discretized distribution. A more detailed discussion is provided in the online companion.

\paragraph{Discussion of weakly homogeneous functions} Any homogeneous function $f$ of degree $1$ over $\Theta$ is clearly weakly homogeneous of degree $1$ and tolerance $\eta = 1$ over $\Theta$. For instance, $f(x) = \max\{x_1,\ldots, x_n\}$ and $f(x) = (\sum_{i=1}^n x_i^r)^{1/r}$ are homogeneous functions of degree $1$ over $\mathbb{R}$. Moreover, any convex function on a domain containing the origin and satisfying $f(0)\leq 0$ is weakly homogeneous of degree $1$ over $[0,1]$. 

Some concave functions are weakly homogeneous with a strictly positive degree. For example, $f(x) = (\sum_{i=1}^n x_i)^r$ with domain $\mathbb{R}_+^n$, for $r \in (0,1]$, is weakly homogeneous of degree $r$ over $\mathbb{R}_+$. 

A differentiable function $f$ is weakly homogeneous of degree $d$ over $[0,1]$ if and only if 
$$
x^\top \nabla f(x) \geq d\ f(x) \quad \text{for all } x\in \mathrm{dom}(f).
$$
For example, let $f(x) = g(\sum_{i=1}^n x_i)$, where $g$ is an increasing, differentiable, and concave function over $\mathbb{R}_+$. In this case, the above inequality is equivalent to requiring that the \emph{elasticity} of $g$, defined as 
$$
\eta(z) = \frac{z g'(z)}{g(z)},
$$ 
satisfies $\eta(z) \geq d$ for all $z\in \mathbb{R}_+$. The elasticity $\eta(z)$ of any such function $g$ is always less than or equal to $1$. Furthermore, $g$ has constant elasticity $r$ if and only if $g(z) = c z^r$ for some constant $c > 0$. Note that some concave functions may have zero minimum elasticity. For instance, $g(z) = 1-e^{-\lambda z}$, with $\lambda > 0$, has elasticity decreasing from $1$ at $z=0$ to $0$ as $z\rightarrow \infty$.

Many functions are weakly homogeneous over $[0,1]$ with a constant tolerance $\eta$. Any monotone subadditive function $f:\mathbb{R}^n\rightarrow \mathbb{R}_+$ is weakly homogeneous over $[0,1]$ with tolerance $\eta = 2$. This follows from the following lemma: 

\begin{lemma}\label{lem:fact2} 
If $f$ is a monotone and subadditive function on $\mathcal{X}\subseteq \mathbb{R}_+^n$, then for every $\lambda \in (0,1]$ and $x\in \mathcal{X}$, 
$$
f(x) \leq \left\lceil \frac{1}{\lambda} \right\rceil f(\lambda x).
$$
\end{lemma}

By this lemma, we have $f(\theta x)\geq (1/\lceil 1/\theta \rceil) f(x)$. Combined with the inequality 
$$
\frac{1}{\lceil 1/\theta \rceil} \geq \frac{1}{1/\theta +1} \geq \frac{\theta}{2},
$$
we obtain 
$$
\frac{1}{2}\, \theta f(x) \leq f(\theta x),
$$ 
i.e., $\eta = 2$. 

In addition, any function $f$ that is subadditive and convex over a domain containing the origin and satisfies $f(0)=0$ is weakly homogeneous over $[0,1]$ with tolerance $\eta = 1$. If $f(0)\leq 0$, the same conclusion still holds. This follows from the inequality 
$$
f(\theta x) \geq f(x) - f((1-\theta)x) \geq f(x) - (1-\theta) f(x) = \theta f(x),
$$
where the first inequality is by subadditivity and the second inequality by convexity. 

Finally, any concave function defined on a domain containing the origin and satisfying $f(0)\geq 0$ is weakly homogeneous with tolerance $\eta = 1$. This follows directly from the definition of concave functions.

\paragraph{The importance of the degree of homogeneity} In general, the dependence of the approximation factor on the degree parameter $d$ is unavoidable due to the truncation step that assigns values smaller than $a\tau_i$ to zero for item $i\in \Omega$. This can lead to a significant loss in approximation accuracy for functions with a small degree of weak homogeneity, particularly when the item value distributions place substantial mass near zero. We illustrate this phenomenon with the following example. 

Let $f(x) = x^r$ on the domain $\mathbb{R}_+$, for some $r \in (0,1]$. Let $X$ be a non-negative random variable with cumulative distribution function $P$, and assume that $P(\tau)=1-\epsilon$. Consider approximating $\E[X^r]$ by truncating the lower tail of the distribution and evaluating $\E[(X\ind{\{X\geq a\tau\}})^r]$. For any $a\in [0,1]$, we can write
$$
\E[(X\ind{X\geq a\tau})^r] = \rho\, \E[X^r],
$$
where
$$
\rho = \frac{\E[X^r \ind{X\geq a\tau}]}{\E[X^r]} = \frac{\int_{a\tau}^\infty x^r\, dP(x)}{\int_{0}^\infty x^r\, dP(x)}.
$$
Clearly, $\rho \in [0,1]$, and we desire $\rho$ to be as close to $1$ as possible to ensure a good approximation. However, there exist instances where $\rho$ can be arbitrarily close to zero, as shown below. 

Consider a distribution $P$ defined by $P(x) = x^d$ for $x\in [0,1]$, with $d > 0$. Then $\tau^d = 1-\epsilon$. Using basic calculus, we obtain
$$
\rho = 1 - \bigl(a (1-\epsilon)^{1/d}\bigr)^{r+d}.
$$
Suppose we choose $a = \epsilon^c$ for a fixed constant $c > 0$, and set $r = d = \epsilon$. Then,
$$
\rho = 1 - \epsilon^{2c\epsilon} (1-\epsilon)^{2\epsilon} \downarrow 0 \quad \text{as } \epsilon \downarrow 0.
$$
This demonstrates that, for this family of instances, $\E[(X\ind{X\geq a\tau})^r]$ can be an arbitrarily poor approximation of $\E[X^r]$ as $\epsilon \rightarrow 0$, particularly when the degree of homogeneity $r$ is small. Therefore, the degree parameter $d$ plays a crucial role in bounding the approximation loss, especially in settings with heavy lower tails in the distribution.  

\subsubsection{Extendable Concave Functions}

The weak homogeneity condition in Corollary~\ref{cor:cf} limits the approximation guarantees to valuation functions with strictly positive degrees of homogeneity. In this section, we show that similar guarantees can also be established for certain valuation functions that exhibit a zero degree of homogeneity.

\begin{definition} 
A monotone, subadditive, and concave function $f$ defined on $\mathbb{R}_+^n$ is said to admit an \emph{extension on $\mathbb{R}^n$} if there exists a function $f^*$ that is monotone, subadditive, and concave on $\mathbb{R}^n$ and satisfies $f^*(x) = f(x)$ for all $x\in \mathbb{R}_+^n$.
\label{def:econc}
\end{definition}

For example, consider the function $f(x) = g(\sum_{i=1}^n x_i)$ defined on $\mathbb{R}_+$ where $g(z) = 1-e^{-\lambda z}$ for $\lambda > 0$. This function has a weak homogeneity degree of zero and therefore falls outside the scope of Corollary~\ref{cor:cf}. However, $f$ admits an extension to $\mathbb{R}^n$. For instance, define $f^*(x) = g^*(\sum_{i=1}^n x_i)$, where
$$
g^*(z) = \begin{cases}
1-e^{-\lambda z}, & \text{if } z\geq 0,\\
\lambda z, & \text{otherwise}.
\end{cases}
$$

The following theorem establishes an approximation guarantee for functions $f$ that admit such extensions.

\begin{theorem} \label{thm:conc} 
Assume that $f$ is a monotone, subadditive, and concave function on $\mathbb{R}_+^n$ that admits an extension to $\mathbb{R}^n$, and let $\epsilon \in (\Delta,1)$. Then the discretization algorithm guarantees that for every set $S\subseteq \Omega$ with $|S|\leq k$, 
$$
\frac{1}{2}(1-\epsilon)^{k-1} (1-\Delta/\epsilon) v(S) \leq u(S)\leq 2\frac{1+ak/\epsilon - \Delta/\epsilon}{(1-\epsilon)^{k}(1-\Delta/\epsilon)^2}v(S).
$$
\end{theorem}

As a direct consequence, we obtain the following corollary.

\begin{corollary} 
Assume that $\Delta k < 1$. Under the same conditions as in Theorem~\ref{thm:conc}, and choosing $a=\epsilon^2$ and $\epsilon = c/k$ for some constant $c\in (\Delta k,1)$, we have that for every set $S\subseteq \Omega$ with $|S|\leq k$, 
$$
\frac{1}{2} e^{-\frac{c}{1-c}}(1-\Delta k/c) v(S)\leq u(S)\leq 2 e^{\frac{c}{1-c}} \frac{1+c}{(1-\Delta k/c)^2} v(S).
$$
\label{cor:conc}
\end{corollary}

Under the conditions of Corollary~\ref{cor:conc}, the sketch valuation function achieves an approximation factor $\alpha$ given by
$$
\alpha = 4\frac{1+c}{(1-\Delta k/c)^3}e^{2\frac{c}{1-c}},
$$
which can be made arbitrarily close to $4$ by selecting $c$ sufficiently small when $\Delta = 0$. 

Theorem~\ref{thm:conc} removes the requirement of the weak homogeneity condition for a subclass of concave functions, thereby extending the approximation guarantees to concave valuation functions that do not exhibit a strictly positive degree of homogeneity.  

However, not all concave functions on $\mathbb{R}_+^n$ admit an extension on $\mathbb{R}^n$. For example, consider $f(x) = g(\sum_{i=1}^n x_i)$, where $g$ has a vertical tangent at $z = 0$. If $g$ is differentiable at $z = 0$, then $\lim_{z\downarrow 0} g'(z) = \infty$. In this case, $f$ does not admit an extension. A concrete example is the power function $g(z) = z^r$ with $r \in (0,1)$, which is concave and strictly increasing on $\mathbb{R}_+$ but has infinite slope at the origin, violating the extension condition required by Theorem~\ref{thm:conc}.

\subsubsection{Coordinate-wise Conditions}

Some functions that fail to satisfy the weak homogeneity condition may still meet a more relaxed form, which we refer to as \emph{coordinate-wise weak homogeneity}.  

As an illustrative example, consider $x\in \mathbb{R}_+^2$ and the function $f(x) = x_1+x_1x_2$. For every $\theta \in [0,1]$, we have $f(\theta x) = \theta x_1 + \theta^2 x_1 x_2$, which does not satisfy $f(\theta x) \geq (1/\eta) \theta f(x)$ for some constant $\eta > 0$ for all $\theta\in [0,1]$ and all $x_1,x_2 \geq 0$. However, $f$ is coordinate-wise weakly homogeneous, as we show below.

\begin{definition} \label{def:whcor} 
A function $f$ is said to be \emph{coordinate-wise weakly homogeneous of degree $d$ with tolerance $\eta$ over a set $\Theta\subseteq \mathbb{R}$} if for every $i\in [n]$,
$$
(1/\eta)\ \theta f(x)\leq f\Bigl(\sum_{j\neq i} x_j e_j + \theta x_i e_i\Bigr)\leq \theta^d f(x),
$$
for all $x$ in the domain of $f$ and all $\theta \in \Theta$.
\end{definition}

In this section, we show that approximation guarantees can still be derived when the weak homogeneity condition holds coordinate-wise. 

\begin{theorem} \label{thm:coordinate} 
Assume that $f$ is a monotone subadditive or submodular function, and is coordinate-wise weakly homogeneous with degree $d$ and tolerance $\eta$ over $[0,1]$. Let $\epsilon \in (\Delta, 1)$. Then, the discretization algorithm guarantees that for every set $S\subseteq \Omega$ with $|S| \leq k$,
$$
\frac{1}{2}(1-\epsilon)^{k-1}(1-\Delta/\epsilon) v(S)\leq u(S)\leq 2\eta^k\frac{1+a^d k/\epsilon - \Delta /\epsilon}{(1-\epsilon)^{2k-1}(1-\Delta/\epsilon)^2}v(S).
$$
\end{theorem}

The proof is provided in the online companion.

From Theorem~\ref{thm:coordinate}, we derive the following corollary.

\begin{corollary} 
Assume that $\Delta k < 1$. Under the same conditions as in Theorem~\ref{thm:coordinate}, and assuming $\eta = 1$, let $a = \epsilon^{2/d}$ and $\epsilon = c/k$ for some constant $c\in (\Delta k,1)$. Then, for every set $S\subseteq \Omega$ with $|S|\leq k$,
$$
\frac{1}{2} e^{-\frac{c}{1-c}}(1-\Delta k/c) v(S)\leq u(S)\leq 2 \frac{1+c}{(1-\Delta k/c)^2} e^{\frac{c}{1-c}} v(S).
$$
\label{cor:coordinate2}
\end{corollary}

Finally, we note the following useful facts. First, any function $f$ that is subadditive and coordinate-wise convex on a domain containing $0$, with $f(0)=0$, is weakly homogeneous over $[0,1]$ with tolerance $\eta = 1$. Second, any function $f$ that is coordinate-wise concave on a domain containing $0$, with $f(0)\geq 0$, is also weakly homogeneous over $[0,1]$ with tolerance $\eta = 1$.

\subsubsection{Univariate Transformations}

For a given function $f$, we can sometimes derive stronger approximation guarantees by considering an associated function 
$$
f^*(x_1, \ldots, x_n) = f(\phi_1(x_1), \ldots, \phi_n(x_n)),
$$ 
where each $\phi_i: \mathbb{R}_+\rightarrow \mathbb{R}_+$ is continuous and strictly increasing. These univariate transformations correspond to a change of variables that affects only the input distributions, without altering the structure of the function class to which $f$ belongs. Such transformations are particularly useful when the original function does not directly satisfy the required conditions. 

We illustrate the benefits of this approach through two examples. 

\textbf{Example 1.} Let $f(x) = (\sum_{i=1}^n x_i)^r$ for $r\in (0,1)$. This function is weakly homogeneous over $[0,1]$ with degree $r$, so Corollary~\ref{cor:cf} yields a constant-factor approximation. However, the required support size for the discretized distributions is $O((1/r)k\log k)$, which grows as $r\to 0$. To eliminate the dependence on $r$, we apply the univariate transformation $\phi_i(z) = z^{1/r}$. The transformed function becomes
$$
f^*(x) = \Bigl(\sum_{i=1}^n x_i^{1/r}\Bigr)^r.
$$
The function $f^*$ is subadditive, submodular, convex, and weakly homogeneous over $[0,1]$ with degree $1$ and tolerance $1$. Therefore, by Corollary~\ref{cor:cf}, we obtain a constant-factor approximation with discretized distributions having support size $O(k\log k)$, independent of $r$.

\textbf{Example 2.} Consider $f(x) = 1-\prod_{i=1}^n (1-x_i)$ defined on $[0,1]^n$. This function is submodular and weakly homogeneous over $[0,1]$ with degree $d\leq 1/2$ and tolerance $1$. Applying Corollary~\ref{cor:cf} yields a constant-factor approximation, but the support size of the discretized distributions scales as $O((1/d)k\log k)$, which can be undesirable for small $d$. To remove this dependence, we apply the transformation $\phi_i(z)=1-e^{-z}$, leading to the transformed function
$$
f^*(x) = 1 - e^{-\sum_{i=1}^n x_i}.
$$
The function $f^*$ is concave and submodular with an extension on $\mathbb{R}^n$, so Corollary~\ref{cor:conc} applies. This gives a constant-factor approximation with discretized distributions of support size $O(k\log k)$, again avoiding dependence on the weak homogeneity degree $d$.

\section{Using Sketch Value Oracles in Discrete Optimization}
\label{sec:opt}

In this section, we demonstrate how the approximation guarantees of the sketch functions presented in the preceding sections make them suitable for approximately solving best set selection and welfare maximization problems. Before presenting these results, we formally define both optimization problems and recall well-known approximation algorithms that achieve constant-factor guarantees. We then describe how our sketch valuation functions can be combined with these algorithms.

The \emph{Best Set Selection} problem involves identifying a subset $S^*\subseteq \Omega$ of size $k$ that maximizes a given set function $u$, i.e.,
$$
S^* \in \arg\max_{S\subseteq \Omega: |S|=k} u(S).
$$
A set $S$ is said to be a $\rho$-approximate solution if it satisfies $u(S^*)\leq \rho u(S)$. If $v$ is an $\alpha$-sketch of $u$, and $S$ is a $\rho$-approximate solution for maximizing $v$, then $S$ also serves as a $\rho \alpha$-approximate solution for maximizing $u$. 

This result naturally extends to the more general \emph{Submodular Welfare Maximization} problem. Given $m$ agents with individual cardinality constraints $k_1, \ldots, k_m$, the objective is to find disjoint subsets $S_1, \ldots, S_m \subseteq \Omega$ with $|S_j|\leq k_j$ that maximize the total welfare:
$$
\sum_{j=1}^m u_j(S_j),
$$
where each $u_j$ is a monotone submodular set function.

It is well-known that the Greedy Algorithm achieves a $1-1/e$ approximation for Best Set Selection when the objective is monotone and submodular \cite{nemhauser}. The algorithm starts with the empty set and iteratively adds the item that provides the largest marginal gain in utility. For Submodular Welfare Maximization, a greedy allocation algorithm—commonly referred to as the Winner Determination Algorithm—provides a $2$-approximation guarantee \cite{lehmann}. 

We now state the following consequence of Corollary~\ref{cor:cf}.

\begin{corollary} 
Assume that $\Delta k < 1$. For any function class satisfying the conditions of Theorem~\ref{DR}, and by selecting $a = \epsilon^{2/d}$ and $\epsilon = c/k$ for some constant $c\in (\Delta k, 1)$, the greedy algorithms for Best Set Selection and Submodular Welfare Maximization achieve an approximation factor of
$$
4\eta\rho \frac{1+c}{(1-\Delta k/c)^3} e^{2\frac{c}{1-c}},
$$
where $\rho$ is a constant: $\rho = 1/(1-1/e)$ for Best Set Selection, and $\rho = 2$ for Submodular Welfare Maximization.
\label{cor:greedy}
\end{corollary}

When $\Delta = 0$, the approximation factor in Corollary~\ref{cor:greedy} can be made arbitrarily close to $4\eta \rho$ by choosing a sufficiently small constant $c$. As noted in Section~\ref{sec:approx}, for any given item value distributions, $\Delta$ can be made arbitrarily small by suitably extending the definition of the sketch value functions.

We now discuss the computational complexity. Consider a set valuation function defined as in (\ref{equ:sutil}), where each item's value is drawn from a discrete probability distribution with support of size at most $s$. Evaluating the expected utility $u(S)$ for a subset $S$ of cardinality $k$ requires computing over all combinations of value realizations, resulting in a computational complexity of $O(s^k)$ per evaluation. The Greedy Algorithm proceeds in $k$ iterations, and in iteration $t$, it evaluates $n-(t-1)$ candidate sets of size $t$. Hence, the total computational cost of the Greedy Algorithm is $O(n s^k)$. This complexity becomes linear in $n$ if both $s$ and $k$ are constants. More generally, the algorithm runs in polynomial time $O(n^{1+\gamma})$ for some constant $\gamma > 0$ if and only if $s^k = O(n^\gamma)$. For instance, this condition holds when $s = O(k \log k)$ and $k \leq \gamma \log n / \log \log n$.

\section{Numerical Results}
\label{sec:num}

We report numerical results that substantiate the accuracy of the sketch function, computed using discretized item-value distributions as outlined in Algorithm \ref{alg:disc}. The evaluation encompasses a range of set valuation functions, item-value distributions, set sizes, and parameter configurations of the discretization algorithm. Experiments were performed on both synthetic and real-world datasets. The code and datasets are available on GitHub: \url{https://github.com/Sketch-EXP/Sketch}.

\subsection{Experimental Settings}

\subsubsection{Goals}
We address two objectives: (1) the approximation of a stochastic valuation set function for arbitrary sets—referred to throughout as \emph{Function Approximation}; and (2) the approximate solution of the \emph{Best Set Selection Problem}, in which value oracle calls are evaluated using a sketch valuation function. The first objective forms the primary focus of our theoretical work, with results reported in Section~\ref{sec:algoapprox}. The second objective concerns the application of our sketch to discrete optimization, with corresponding theoretical guarantees presented in Section~\ref{sec:opt}. We next outline how these two objectives are evaluated in our experiments.
 
\paragraph{Function Approximation} For any given function $f$ and any subset $S$ of items, let $\hat{w}(S)$ denote the sample mean estimator of 
$w(S) = \mathbb{E}[f(X_S)]$, using $N$ samples of item values drawn from the 
underlying item–value distributions $F_i$ for $i \in S$. That is, 
$$
\hat{w}(S) = \frac{1}{N}\sum_{j=1}^N f(x_S^{(j)}),
$$
where $x_S^{(j)} = (x_{i,j},\, i \in S)$, and each $x_{i,j}$ is sampled from $F_i$.

We evaluate the accuracy of approximating a stochastic set valuation function $u$ by 
a sketch function $v$ by comparing the values of $\hat{u}(S)$ and $\hat{v}(S)$, 
where $S$ is sampled uniformly at random from all subsets of $k$ items. Here, 
$\hat{u}(S)$ is computed using samples from the given item–value distributions, 
while $\hat{v}(S)$ is computed using samples from the corresponding discretized 
distributions. Specifically, we consider the ratio $\hat{v}(S)\,/\,\hat{u}(S)$ 
for each sampled set $S$. The closer this ratio is to $1$, the better the approximation accuracy.
 
\paragraph{Best Set Selection Problem} 
We evaluate the performance of our sketch function when used as the value oracle 
for approximately solving the Best Set Selection optimization problem. We employ 
the classic Greedy Algorithm, which starts with an empty set of items and, at each 
step, selects a previously unselected item with the largest marginal gain relative 
to the items already chosen. This Greedy Algorithm is known to achieve an approximation ratio of $1 - 1/e$ for the class of monotone submodular set functions. 

Let $S_w$ denote the output of the Greedy Algorithm when using a value oracle that, 
for every input set $S$, returns $\hat{w}(S)$, where $\hat{w}(S)$ is the sample 
mean estimator of $w(S)$ based on $N$ samples of values for each item. We run the 
Greedy Algorithm twice: once with the value oracle $v$ and once with the value oracle $u$, obtaining two outputs, $S_v$ and $S_u$. We then compare the values $\hat{u}(S_v)$ and $\hat{u}(S_u)$, where for any given set $S$, $\hat{u}(S)$ denotes the sample mean estimate of $u(S)$ using $N$ samples of each item’s values from a held-out test set.

\subsubsection{Choice of Set Valuation Functions} 
For the choice of set valuation functions, we consider three examples: the Maximum Value Function $f(x)=\max\{x_1,\ldots, x_n\}$; the CES-$r$ Function $f(x)=(x_1^r+\cdots + x_n^r)^{1/r}$ for $r\geq 1$ (here we consider $r=2$); and the Square-Root Function $f(x) = \sqrt{x_1+\cdots + x_n}$. These functions were selected as they capture salient properties commonly arising in applications. The Maximum Value Function reflects the notion that the value of a set corresponds to its most relevant item, which is typical in information retrieval and recommender-system contexts. The CES function depends smoothly on the parameter $r$; when $r=1$, it reduces to an additive valuation, whereas as $r$ increases, greater relative weight is placed on the largest-value item. The CES function is widely used in economics, consumer theory, and production theory to model preferences and technologies with varying degrees of substitutability. The Square-Root Function captures the natural notion of diminishing returns via concave aggregation. All three functions are weakly homogeneous and thus fall within the scope of our theoretical results. As shown in Table~\ref{tab:fprop}, the Maximum Value and CES functions have degree $d=1$, and all three functions have tolerance $\eta=1$. The Square-Root Function has elasticity $1/2$ and consequently degree $1/2$.

\subsubsection{Choice of Item Value Distributions}
\paragraph{Synthetic Data} We consider two parametric families of item value distributions: the exponential and Pareto families. Considering these two families allows us to capture both light- and heavy-tailed distributions. In the former case, each item $i$ takes random values according to the exponential distribution with mean $1/\lambda_i$, i.e., $P_i(x) = 1 - e^{-\lambda_i x}$, for $x \geq 0$. In the latter case, each item $i$'s value distribution follows a Pareto distribution with scale parameter $\xi_i$ and shape parameter $\alpha_i$, i.e., $P_i(x) = 0$ for $x < \xi_i$ and $P_i(x) = 1 - (\xi_i/x)^{\alpha_i}$ for $x \geq \xi_i$. For Pareto distributions, we restrict attention to those with finite means, which corresponds to shape parameters $\alpha_i > 1$. Note that a Pareto distribution has infinite variance if and only if its shape parameter satisfies $\alpha_i \leq 2$.

\paragraph{Real-World Data} We consider the following datasets: YouTube, StackExchange, and the New York Times. In the YouTube dataset, items correspond to content publishers, with item values defined as the number of views attained by their published content. In the StackExchange dataset, items correspond to experts, with values reflecting the average number of upvotes received for their answers. In the New York Times dataset, items are treated as news sections, with performance measured by the number of comments per article. All datasets employed in this study are publicly available. For each item, we compute the empirical distribution of its performance based on the available data and generate random samples of item values from this empirical distribution for use in our experiments. Further information about the datasets and additional results are available in the online companion. 

\subsubsection{Comparison with the Test Score Benchmark} \label{sec:testscores} 
We compare the performance of our sketch function with that of the sketch function proposed in \cite{SVY21}, which, for any set $S$ of cardinality $k$, is defined as
$
v(S) = \sum_{j=1}^k a_{\pi_j(S),j}/j,
$
where $a_{i,j}$ are the test scores defined by
$
a_{i,j} := \E[f((X_i^{(1)}, \ldots, X_i^{(j)}))],
$
with $X_i^{(1)}, \ldots, X_i^{(j)}$ being independent and identically distributed random variables with distribution $P_i$, and
$
\pi_j(S) = \arg\max \{a_{i,j} : i \in S \setminus \{\pi_1(S),\ldots, \pi_{j-1}(S)\}\},
$
for $j=1,\ldots,k$. As shown in \cite{SVY21}, this sketch function satisfies the following guarantee for every set of cardinality $k$ in the class of valuation functions satisfying the extended diminishing returns condition: $(1/(2(\log(k)+1))) u(S) \leq v(S) \leq 6\, u(S)$. In our experiments, the test scores are estimated using sample mean estimators with 
$N$ samples of each item’s values.

\begin{figure}[t!]
\centering
\begin{tabular}{c}
{\footnotesize (a) Exponential distribution case}\\
\includegraphics[width=0.8\textwidth]{./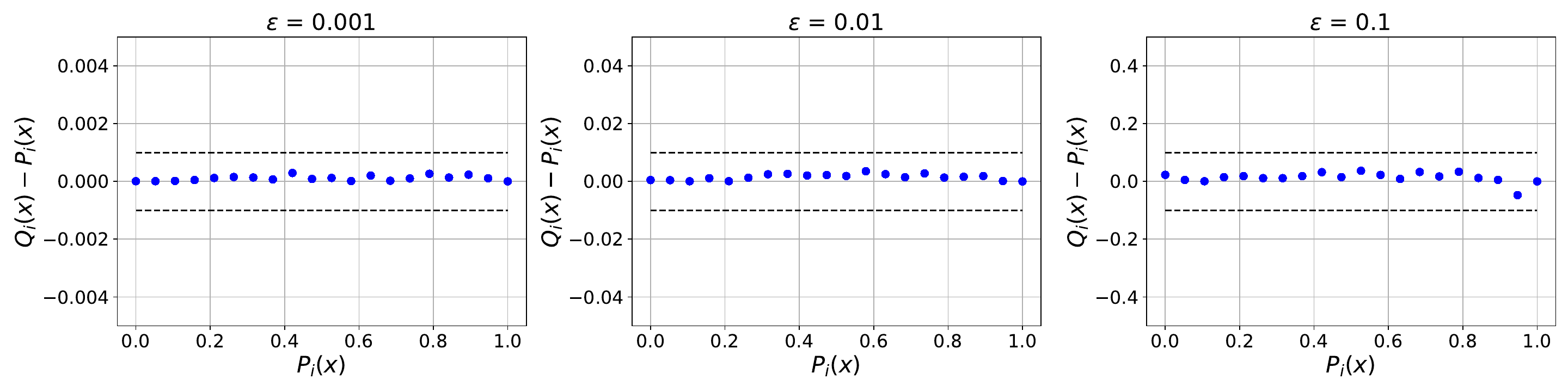}\\
{\footnotesize (b) Pareto distribution case}\\
\includegraphics[width=0.8\textwidth]{./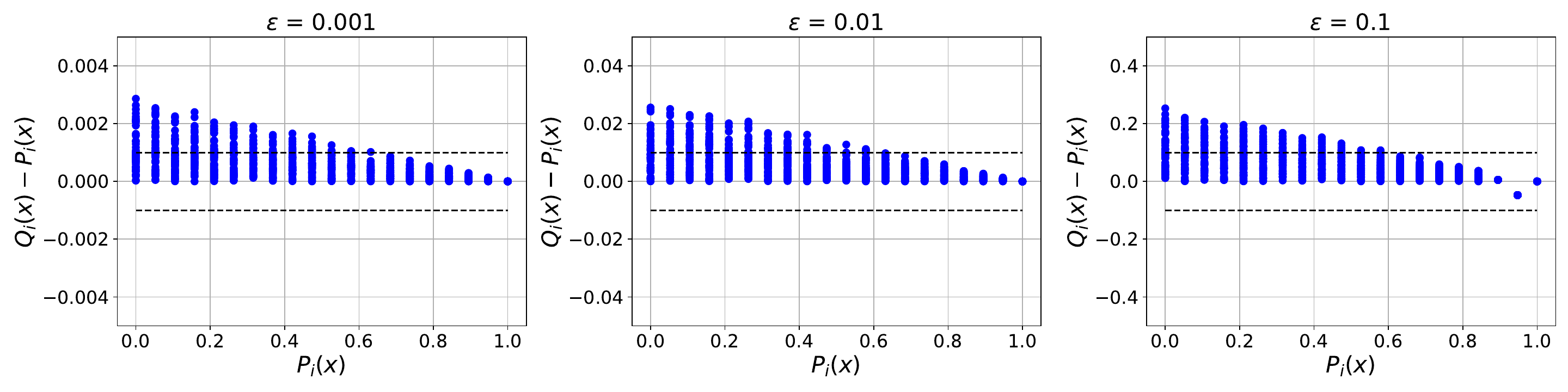}
\end{tabular}

\caption{The difference between distributions $Q_i$ and $P_i$ evaluated at different quantiles of distribution $P_i$. 
The horizontal dash lines indicate $\pm\epsilon$ levels.}
\label{fig:CDF_diff}
\end{figure}
\subsection{Results}

In our experiments, we employ the discretization Algorithm~\ref{alg:disc} with parameters $a$ and $\epsilon$ set as $a = \epsilon^2$ and $\epsilon = c/k$, where $c$ is a positive constant whose value is specified in the context of each experiment. As noted in Section~\ref{sec:algoapprox}, for this choice of parameters, the support size of the discretized distributions is $O(k \log k)$.

\subsubsection{Synthetic Data} \label{synthetic}

We consider a ground set of $n = 50$ items. In the case of exponential distributions, the mean values uniformly span the interval $[0.01,1]$. For the Pareto distributions, the scale parameters are fixed at $1.5$, while the shape parameters uniformly span the interval $[1.1,3]$. This range of shape parameter values covers distributions with both finite and infinite variance. For each item, we generate $N=10{,}000$ training samples of random performance values, which are used to estimate the set valuation function.

Before presenting the main results of our numerical evaluation, we first demonstrate how well the discretized distribution $Q_i$, computed by Algorithm~\ref{alg:disc}, approximates the input distribution $P_i$, for different values of the parameter \(\epsilon\). Figure~\ref{fig:CDF_diff} shows the gap between the discretized distribution \(Q_i\) and \(P_i\), evaluated at the \(p\)-quantiles of \(P_i\), with \(p\) uniformly spanning the unit interval. These results confirm that \(Q_i(x) - P_i(x) \geq -\epsilon\) for all \(x\) and that the gap between the two cumulative distribution functions shrinks towards zero as $\epsilon$ decreases. For the case of Pareto distribution, the gap tends to be larger for smaller quantile values.


\begin{figure}[t]
\centering

\subcaptionbox{Exponential distribution case\label{fig:boxexp}}{
\includegraphics[width=0.3\linewidth]{./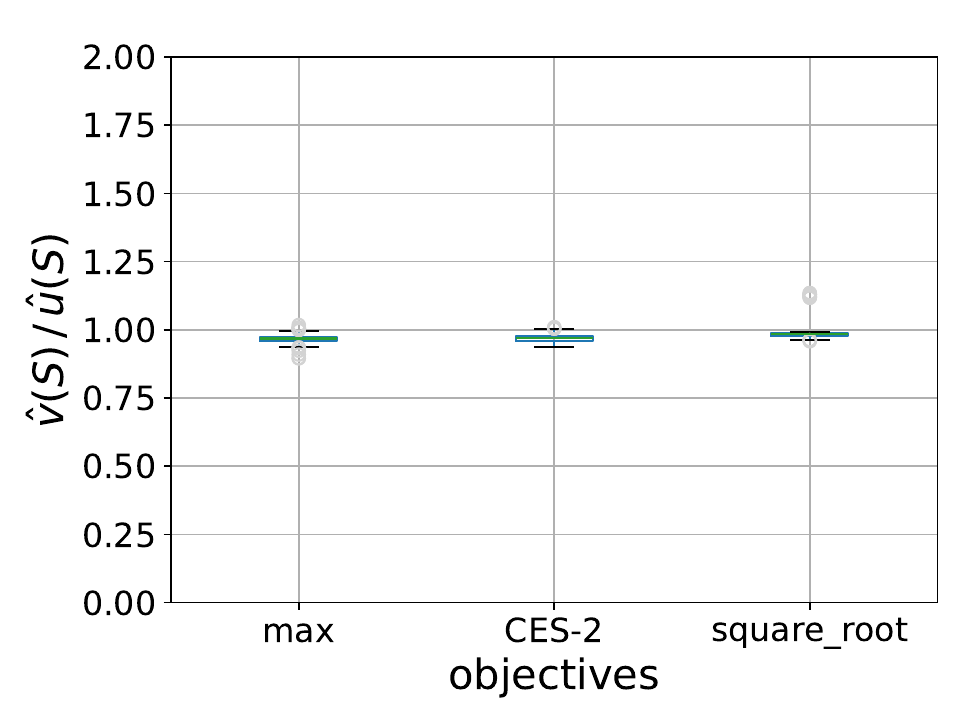}
}
\hspace{2cm}
\subcaptionbox{Pareto distribution case\label{fig:boxpar}}{
\includegraphics[width=0.3\linewidth]{./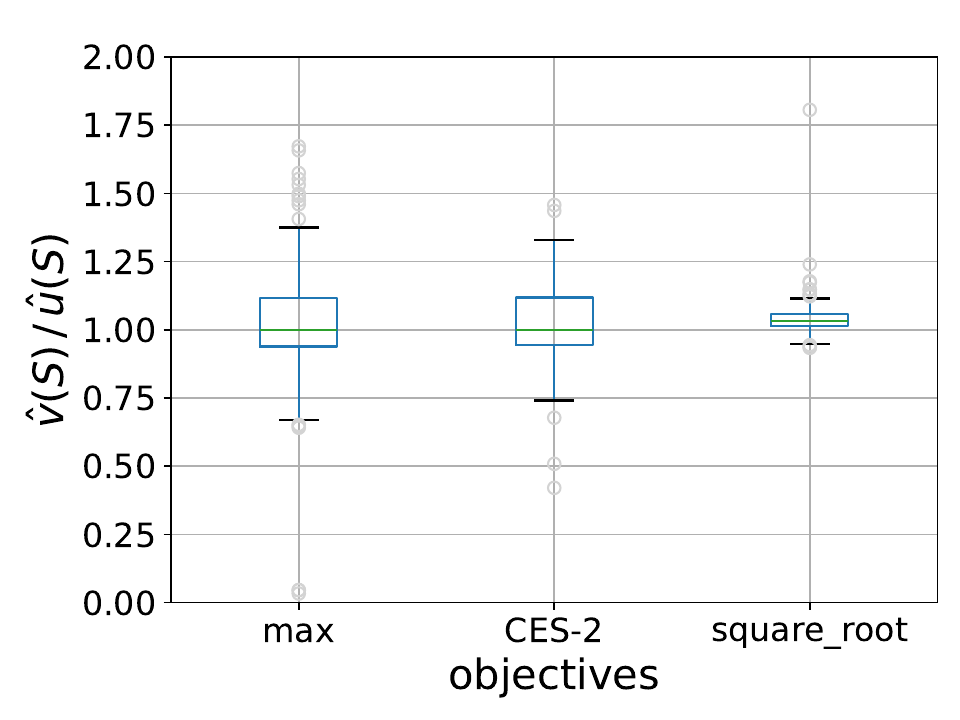}
}

\caption{Function approximation accuracy for various objective functions and item value distributions. 
The median approximation ratios are close to 1.}
\label{fig:box}

\end{figure}
\paragraph{Function Approximation} Recall that we consider the ratio $\hat{v}(S)\, /\, \hat{u}(S)$, where $\hat{v}(S)$ is the sample mean estimator of $v(S)$ with samples drawn from the discretized distributions produced by Algorithm~\ref{alg:disc}, and $\hat{u}(S)$ is the sample mean estimator of $u(S)$ with samples drawn from the original distributions. This ratio is computed for a sample of 10 sets, uniformly drawn from all subsets of $k$ items of the ground set. For the parameter $k$, we consider all values in the range from 1 to 20.

In Figure~\ref{fig:box}, we present results obtained for the parameter $\epsilon$ of Algorithm~\ref{alg:disc}, set as $\epsilon = c/k$ with $c = 0.1$. We observe that the median value of the ratio $v(S)/u(S)$ tends to be close to $1$ for all item value distributions and functions considered. Specifically, the ratio is tightly concentrated around $1$ for the exponential distribution, whereas it exhibits a wider range of values for the Pareto distribution, for both the maximum value and CES-2 functions.

Intuitively, with $\epsilon$ set as $\epsilon = c/k$, the approximation accuracy is expected to deteriorate for sufficiently large $c$. Our theoretical results suggest that this deterioration may occur when $\epsilon$ exceeds a value of the order $1/k$. To verify this experimentally, we consider $c$ uniformly distributed over the range from 0.1 to 10. The results shown in Figure~\ref{fig:eps} clearly support both our intuition and the theoretical predictions. Notably, the approximation accuracy tends to deteriorate as the value of $c$ increases. The approximation remains nearly exact when $\epsilon$ is set to $1/k$.

\begin{figure}[t]
\centering
\begin{tabular}{c}
{\footnotesize (a) Exponential distribution case}\\
\includegraphics[width=0.8\textwidth]{./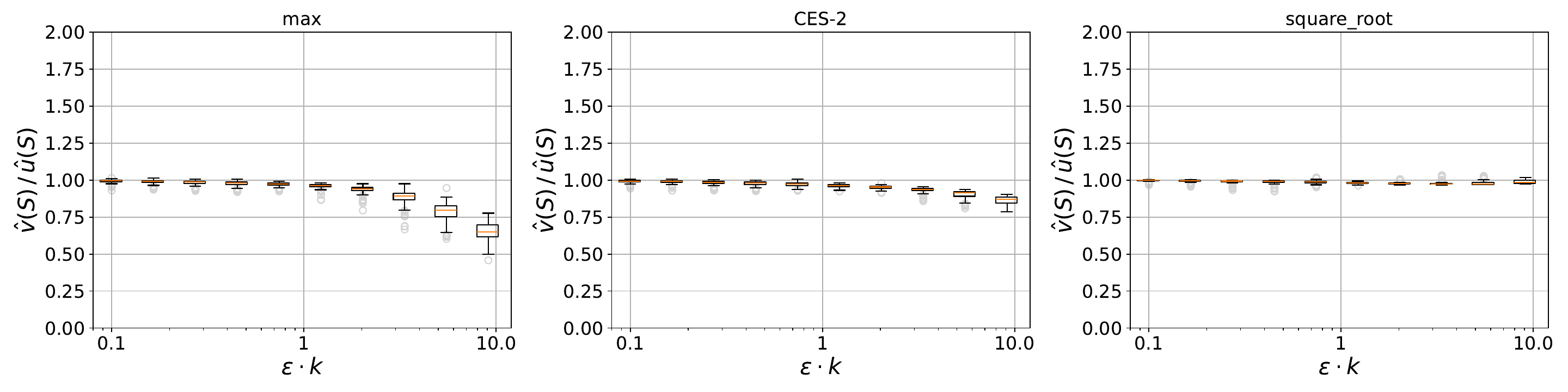}\\
{\footnotesize (b) Pareto distribution case}\\
\includegraphics[width=0.8\textwidth]{./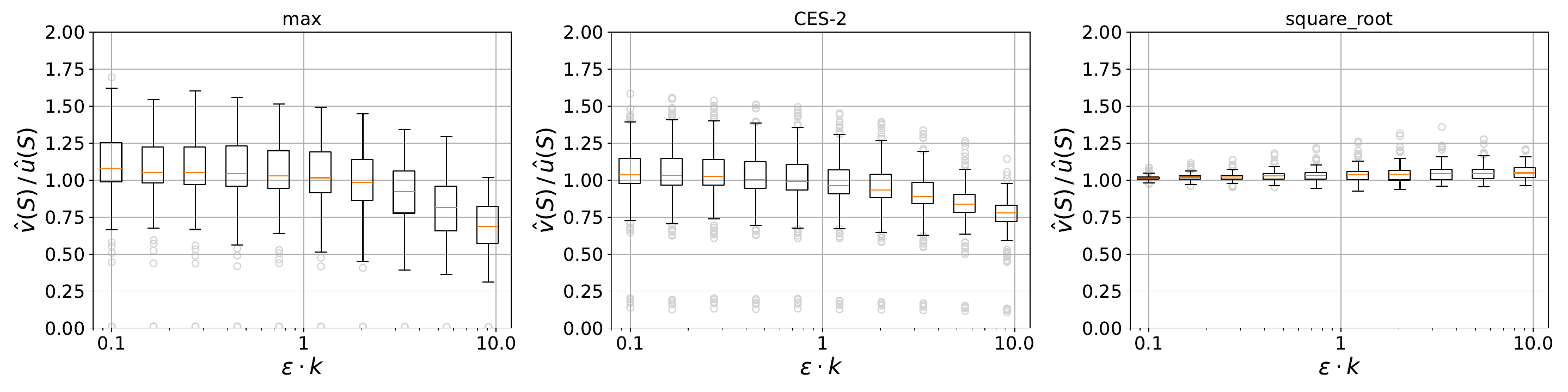}
\end{tabular}

\caption{Function approximation accuracy with varied parameter $\epsilon$. 
As $\epsilon$ increases, the support size decreases, at the expense of lower approximation accuracy. 
The approximation accuracy deteriorates around $\epsilon = 1/k$, regardless of the set size $k$.}
\label{fig:eps}
\end{figure}
For comparison with the test score benchmark, described in Section~\ref{sec:testscores}, we consider our algorithm with $\epsilon = c/k$, where $c = 0.99$. The results in Figure~\ref{fig:testscore-funcapprox} show that our method achieves an approximation ratio close to $1$, whereas the benchmark method tends to significantly overestimate the function values. 

\begin{figure}[t]
\centering

\subcaptionbox{Exponential distribution case\label{fig:tsexp}}{
\includegraphics[width=0.3\linewidth]{./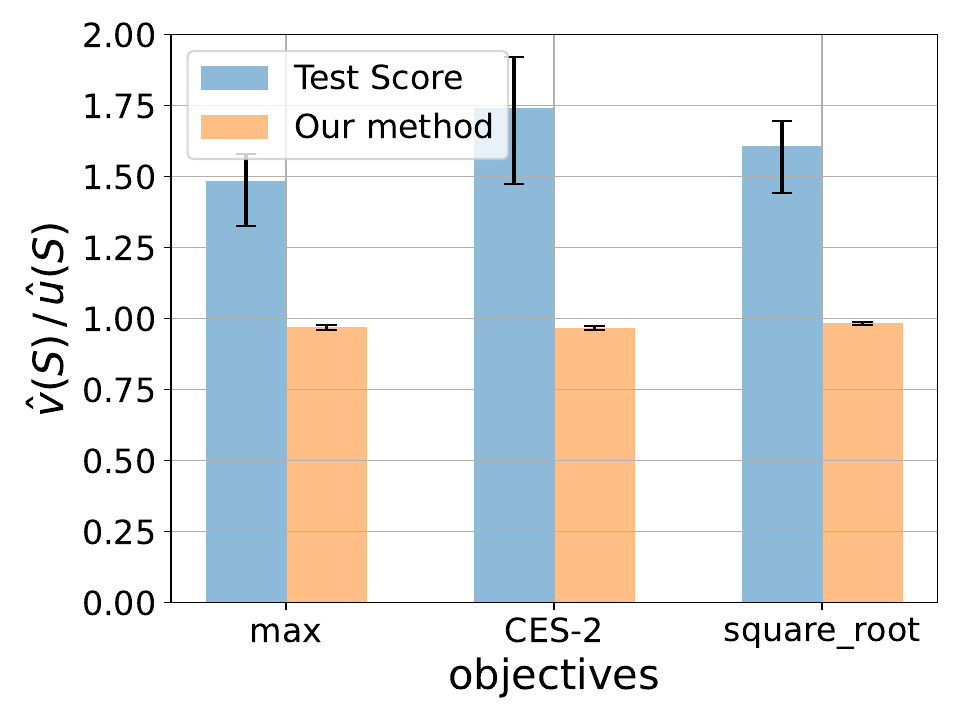}
}
\hspace{2cm}
\subcaptionbox{Pareto distribution case\label{fig:tspar}}{
\includegraphics[width=0.3\linewidth]{./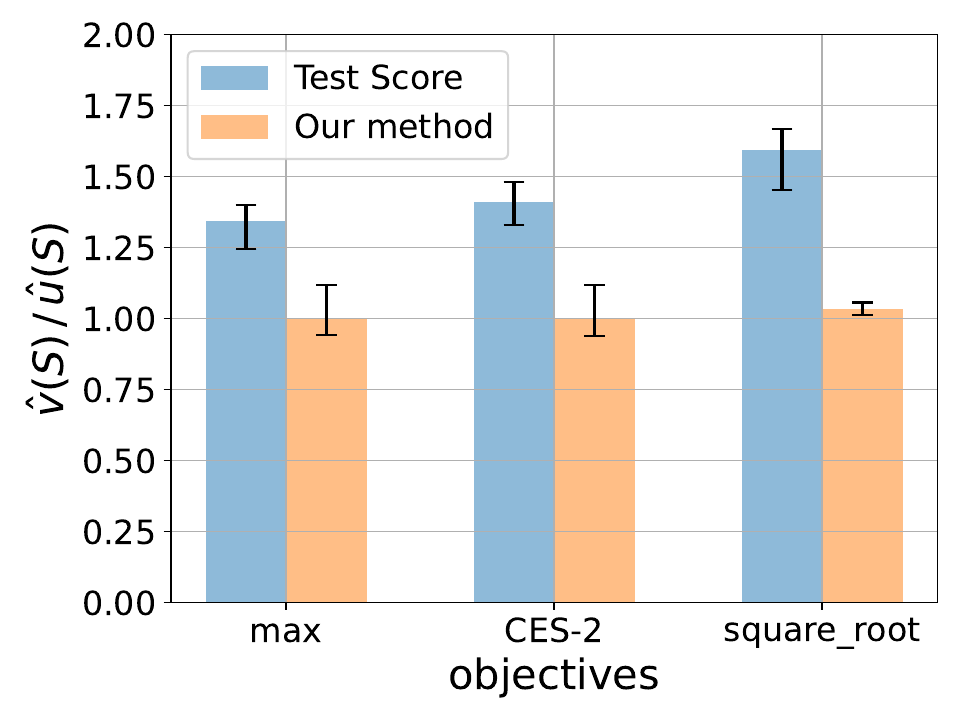}
}

\caption{Function approximation accuracy comparison with the test score benchmark. 
The bars show median values, while the error bars indicate the first and third quartiles.}
\label{fig:testscore-funcapprox}

\end{figure}

\paragraph{Best Set Selection Problem} 
We evaluate the performance of the Greedy Algorithm in approximately solving the best set selection problem, using our sketch function as the value oracle, and compare it with the Greedy Algorithm when the original value function is used as the oracle. For the cardinality constraint $k$, we consider all values in the range from $1$ to $10$. The assumed item–value distributions follow a stochastic order (first-order stochastic dominance), such that the optimal set of items corresponds to those with the largest mean values (for the Pareto case, this corresponds to the distributions with the smallest shape parameters). We examine the solution value as a function of the parameter $c$, where $c = \epsilon \cdot k$. 
The set valuations are computed using sample mean estimators with $N = 10{,}000$ samples of values for each item.

The results in Figure~\ref{fig:opt_eps} show that, for any sufficiently small value 
of $c$, the median approximation ratios are close to $1$. Moreover, the sketch function generally yields significantly superior performance, particularly for the 
Pareto distribution and for the maximum value or CES-2 valuation functions.

\begin{figure}[t]
\centering
\begin{tabular}{c}
{\footnotesize (a) Exponential distribution case}\\
\includegraphics[width=0.8\textwidth]{./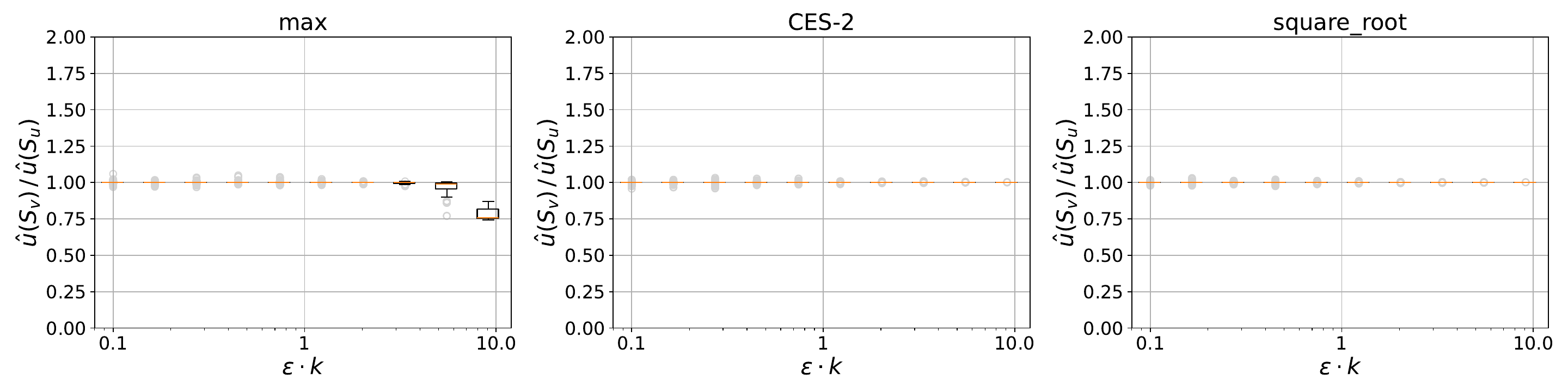}\\
{\footnotesize (b) Pareto distribution case}\\
\includegraphics[width=0.8\textwidth]{./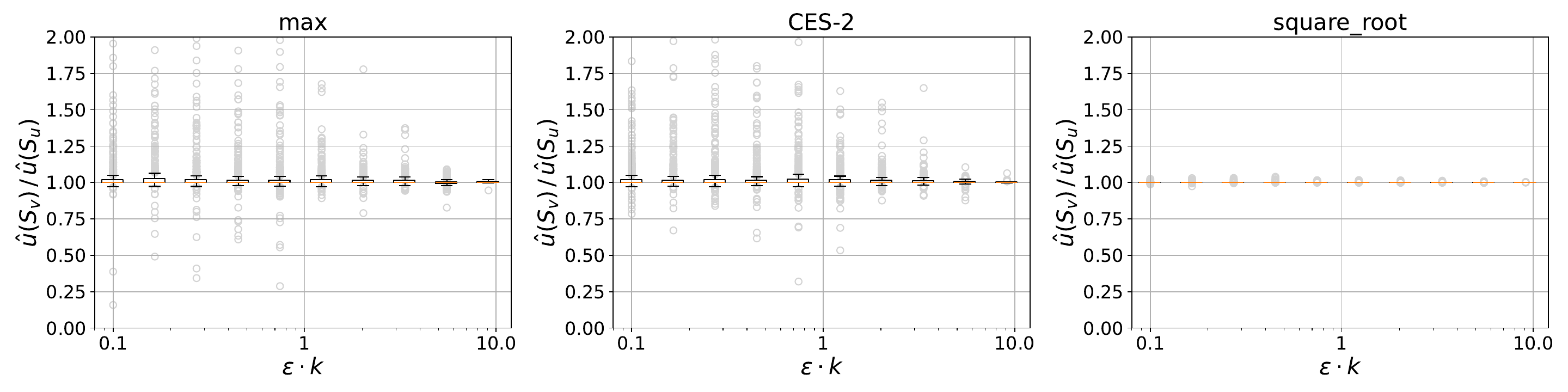}
\end{tabular}

\caption{Best set selection performance for varied parameter $\epsilon$. 
For the Pareto case, note that the lower quartile values are largely above $1$ for both the maximum value 
and CES-2 valuation functions.}
\label{fig:opt_eps}
\end{figure}

\begin{figure}[t]
\centering

\subcaptionbox{Exponential distribution case\label{fig:testfunexp}}{
\includegraphics[width=0.3\textwidth]{./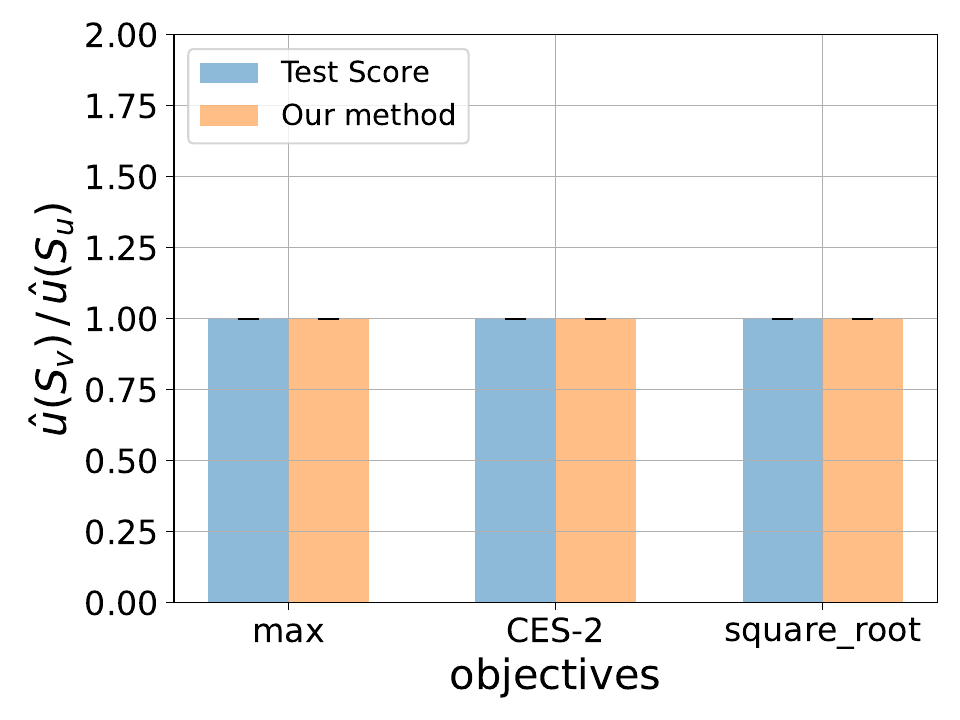}
}
\hspace{1cm}
\subcaptionbox{Pareto distribution case\label{fig:testfuncpar}}{
\includegraphics[width=0.3\textwidth]{./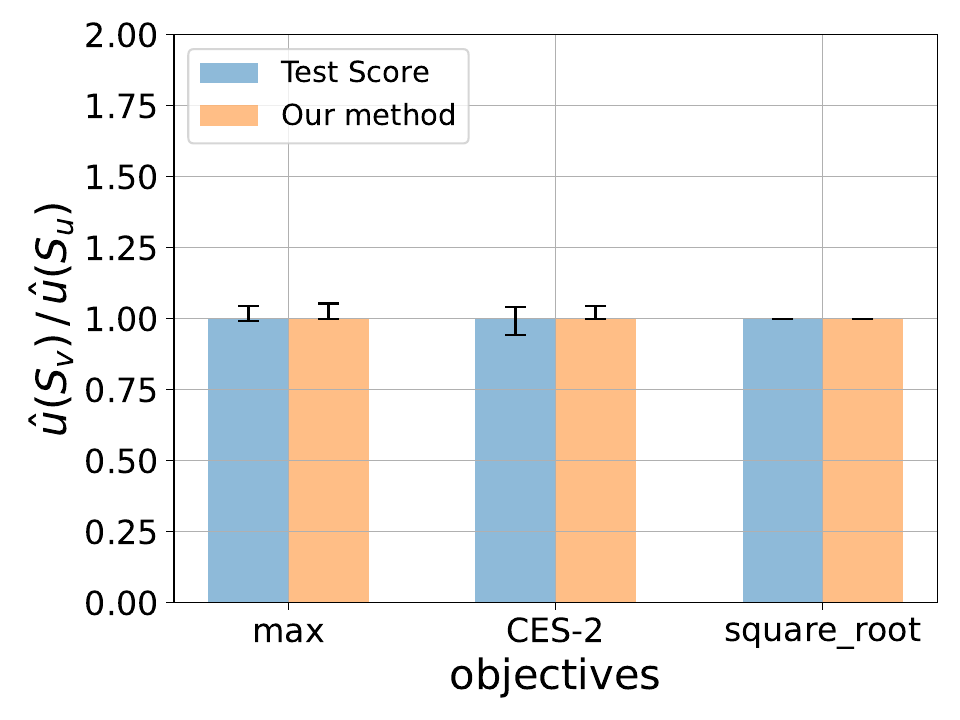}
}

\caption{Best set selection performance comparison with the test score benchmark. 
The bars represent median values, while the error bars indicate the first and third quartiles.}
\label{fig:testscore-funcapprox-opt}

\end{figure}

For comparison with the test score benchmark, we apply our sketch with $c = 0.99$. In Figure~\ref{fig:testscore-funcapprox-opt}, we observe that both methods identify a solution whose value is nearly the same as that found by the Greedy Algorithm using exact value oracles (up to statistical estimation noise). The fact that the test score sketch function is accurate for this problem instance can be explained by the item value distributions obeying first-order stochastic dominance, which implies that the optimal set of items corresponds to that which maximizes the test score valuation function.

\subsubsection{Real-world Data} \label{real}

\begin{figure}[t!]
\centering
\begin{tabular}{c}
{\footnotesize (a) StackEx case}\\
\includegraphics[width=0.8\textwidth]{./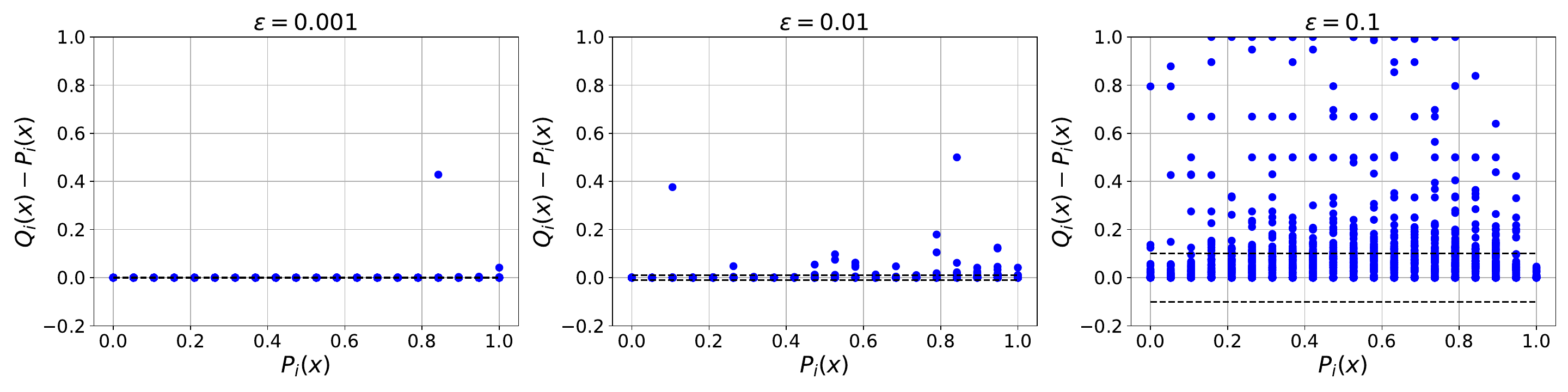}\\
{\footnotesize (b) YouTube case}\\
\includegraphics[width=0.8\textwidth]{./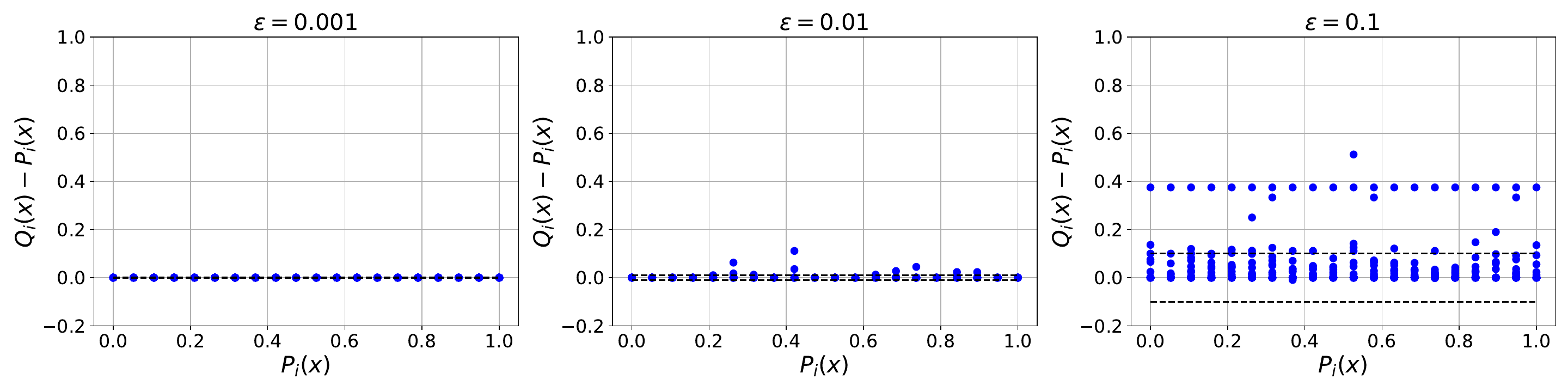}\\
{\footnotesize (c) Times case}\\
\includegraphics[width=0.8\textwidth]{./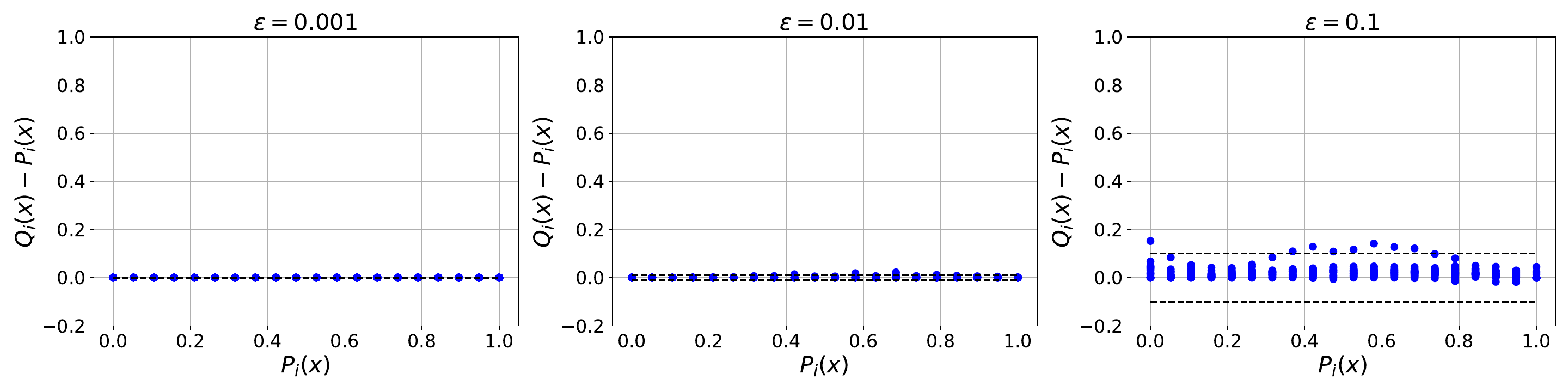}
\end{tabular}

\caption{The difference between distributions $Q_i$ and $P_i$ evaluated at different quantiles of distribution $P_i$. 
The horizontal dash lines indicate $\pm\epsilon$ levels.}
\label{fig:CDF_diff_realworld}
\end{figure}

We first report results on the gap between the input and output distributions of Algorithm~\ref{alg:disc} for three values of the parameter~$\epsilon$. Figure~\ref{fig:CDF_diff_realworld} confirms that the discretized distribution~$Q_i$ first-order stochastically dominates~$P_i$, and that the gap decreases as~$\epsilon$ becomes smaller. In some cases, the gap is larger than that observed for the Exponential and Pareto distributions shown in Figure~\ref{fig:CDF_diff}, which is expected since the real-world input distributions are discrete, leading to localized discrepancies near bin boundaries.

For the sketching approximation evaluation, we run Algorithm~\ref{alg:disc} with $a=\epsilon^2$ and $\epsilon=1/10$, considering set sizes of~$5$ and~$10$. Set valuations are estimated using $N=10{,}000$ training samples per item. Figure~\ref{fig:data} reports the results for the three valuation functions under consideration. Overall, the proposed sketching function achieves high approximation accuracy across valuation functions and set sizes.


\begin{figure}[t]
\centering

\subcaptionbox{Set size $k=5$\label{fig:data5}}{
\includegraphics[width=0.3\textwidth]{./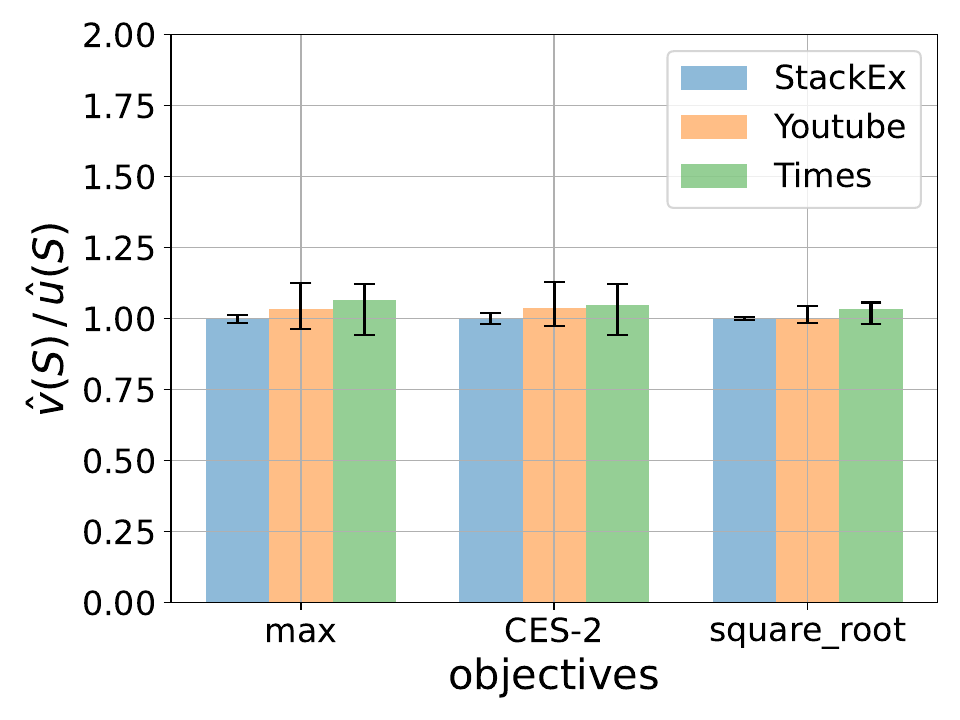}
}
\hspace{1cm}
\subcaptionbox{Set size $k=10$\label{fig:data10}}{
\includegraphics[width=0.3\textwidth]{./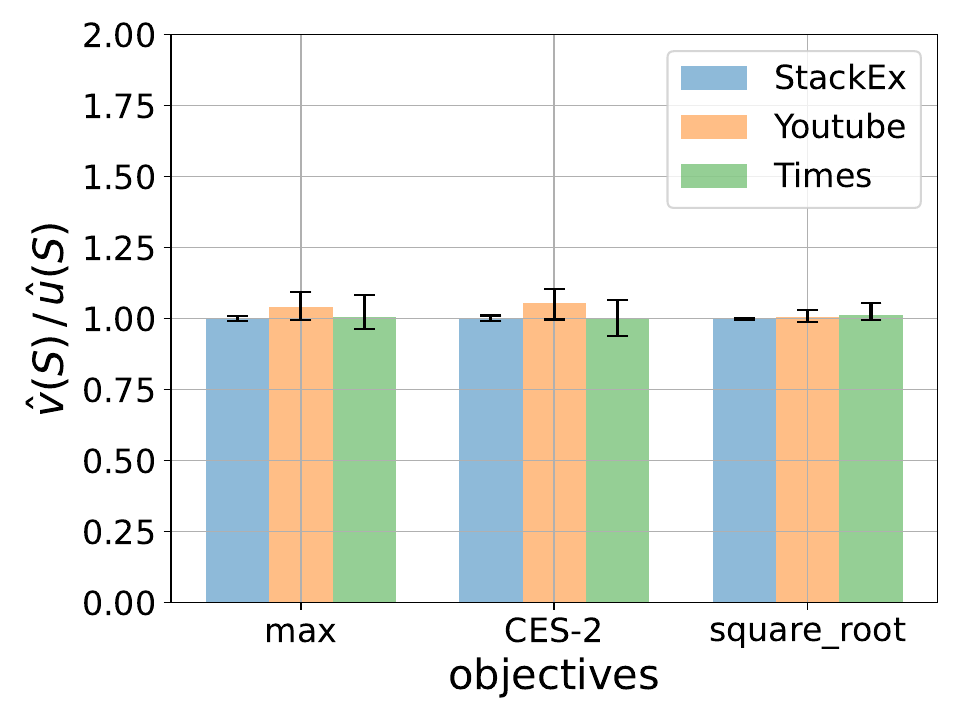}
}

\caption{Function approximation results for distributions derived from three real-world datasets. 
The bars denote median values, while the error bars correspond to the first and third quartiles.}
\label{fig:data}

\end{figure}

\section{Conclusion}
\label{sec:conc}

In this paper, we addressed the problem of finding an effective sketch of a stochastic set valuation function, defined as the expectation of a valuation function over independent random item values. We introduced an efficient algorithm to compute the sketch valuation function by expressing it as the expectation of the valuation function with respect to discretized item value distributions. Our analysis shows that, for a broad class of monotone subadditive or submodular valuation functions satisfying certain conditions, the algorithm achieves a constant-factor approximation for any value query involving sets of items of size at most $k$. Notably, our approach employs discretized distributions with support sizes of $O(k\log(k))$.

Our work provides the first positive results on function approximation for class functions that can accommodate a wide range of valuation functions studied in the existing literature. The results are also relevant for applications in best set selection and welfare maximization problems. As part of future work, it would be interesting to explore alternative systematic discretization strategies and examine the trade-off between approximation accuracy and the complexity of the sketch.


\section*{Acknowledgment}
This work was conducted as part of a thesis at the London School of Economics.


\bibliography{ref} 





%
%
%

\newpage

\appendix

\section{Further Properties of Valuation Functions}

In this appendix, we provide additional properties and examples of valuation functions discussed in the main text.

\subsection{Not All Submodular Functions Are Subadditive}

Consider, for example, the success-probability value function:
\begin{equation*}
f(x) = 1 - \prod_{i=1}^n \big(1 - p(x_i)\big),
\end{equation*}
where $p: \mathbb{R} \rightarrow [0,1]$ is an increasing function. The submodularity of this function can be verified by checking its satisfaction of the weak DR property. For any $x, y \in \mathbb{R}^n$ such that $x \leq y$, the increasing nature of $p$ implies that $f(x) \leq f(y)$. 

To confirm the weak DR property, consider adding $z$ to the $j$-th basis direction of both $x$ and $y$, such that $x \leq y$ and $x_j = y_j$. The weak-DR condition becomes: 
$$
\prod_{i \neq j} (1 - p(x_i)) \, (1 - p(x_j + z)) 
\geq  \prod_{i \neq j} (1 - p(y_i)) \, (1 - p(y_j + z)).
$$
This condition clearly holds since $x_j = y_j$ and $f(x) \leq f(y)$. 

However, for certain choices of the function $p$, $f$ may not be subadditive. For instance, when $n = 1$, $f$ is subadditive if and only if $p$ is subadditive.

\subsection{Coordinate-Wise Concave Functions}

There exist functions that are coordinate-wise concave but are not concave in the classical sense. An example is the maximum value function 
\begin{equation*}
f(x) = \max\{x_1, \ldots, x_n\}, \quad n > 1,
\end{equation*} 
which is, in fact, coordinate-wise \emph{convex} in the standard sense of convex functions. 

On the other hand, a function that is concave in the classical sense (and therefore coordinate-wise concave) is
\begin{equation*}
f(x) = g\Big(\sum_{i=1}^n x_i\Big),
\end{equation*}
where $g$ is concave.

\subsection{Extended Diminishing Returns Property}

A function $f$ is said to satisfy the \emph{extended diminishing returns property} \cite{SVY21} if, for any $i \in [n]$ and $v \geq 0$ that has a non-empty preimage under $f$, there exists $y \in \mathbb{R}_+^n$ with $y_i = 0$ such that: 

(a) $f(y) = v$, and  
(b) $f(x + z e_i) - f(x) \geq f(y + z e_i) - f(y)$ for any $z \in \mathbb{R}$ and any $x$ such that $f(x) \leq f(y) = v$ and $x_i = 0$.  

A simpler, stronger version is that $f$ satisfies 
\begin{equation*}
f(x + z e_i) - f(x) \geq f(y + z e_i) - f(y)
\end{equation*}
for every $z \in \mathbb{R}$ and all $x, y$ such that $f(x) \leq f(y)$ and $x_i = y_i = 0$.

There exist functions that satisfy the extended diminishing returns property but are not DR-submodular. For example, consider 
\begin{equation*}
f(x) = \Big(\sum_{i=1}^n x_i^r\Big)^{1/r}, \quad r > 1.
\end{equation*}
This function satisfies the extended diminishing returns property, as shown in \cite{SVY21}. However, $f$ is not DR-submodular: it is twice-differentiable and convex, hence coordinate-wise convex in the standard sense. In contrast, twice-differentiable DR-submodular functions are coordinate-wise concave in the classical sense.

\subsection{Success Probability Function}
\label{sec:success}

We consider the function 
$$
f(x) = 1 - \prod_{i=1}^n (1 - x_i)
$$ 
on $[0,1]^n$. 

\paragraph{Submodularity} This function is clearly submodular, as it is twice differentiable and 
$$
\frac{\partial^2 f(x)}{\partial x_i \partial x_j} = 
\begin{cases}
- \prod_{l \in [n] \setminus \{i,j\}} (1 - x_l) \leq 0, & i \neq j,\\
0, & i = j.
\end{cases}
$$

We show some properties of $f$ by induction over the sequence of functions $f_1, \ldots, f_n$, where 
$$
f_j(x) = 1 - \prod_{i=1}^j (1 - x_i), \quad 1 \leq j \leq n.
$$
Note that, for $1 \leq j < n$, 
$$
f_{j+1}(x) = x_{j+1} + f_j(x) - x_{j+1} f_j(x).
$$

\paragraph{Subadditivity} We show that $f$ is subadditive by induction. Let $x, y \in [0,1]^n$ such that $x+y \in [0,1]^n$. For the base case $j = 1$, $f_1(x) = x_1$ is clearly subadditive. For the induction step, assume $f_j$ is subadditive for some $1 \leq j < n$. Then:
\begin{align*}
f_{j+1}(x+y) 
&= x_{j+1} + y_{j+1} + f_j(x+y) - (x_{j+1}+y_{j+1}) f_j(x+y)\\
&= x_{j+1} + y_{j+1} + (1 - x_{j+1} - y_{j+1}) f_j(x+y)\\
&\leq x_{j+1} + y_{j+1} + (1 - x_{j+1} - y_{j+1}) (f_j(x) + f_j(y))\\
&= f_{j+1}(x) + f_{j+1}(y) - x_{j+1} f_j(y) - y_{j+1} f_j(x)\\
&\leq f_{j+1}(x) + f_{j+1}(y),
\end{align*}
which shows that $f_{j+1}$ is subadditive.

\paragraph{Weak Homogeneity} We next show that $f$ is weakly homogeneous over $[0,1]$ with tolerance $\eta = 1$ by induction. For the base case $j=1$, $f_1(x) = x_1$, so clearly $f_1(\theta x) \geq \theta f_1(x)$. For the induction step, assume $f_j(\theta x) \geq \theta f_j(x)$ for some $1 \leq j < n$. Then:
\begin{align*}
f_{j+1}(\theta x) 
&= \theta x_{j+1} + f_j(\theta x) - \theta x_{j+1} f_j(\theta x)\\
&= \theta x_{j+1} + (1 - \theta x_{j+1}) f_j(\theta x)\\
&\geq \theta x_{j+1} + (1 - \theta x_{j+1}) \theta f_j(x)\\
&= \theta x_{j+1} + \theta f_j(x) - \theta^2 x_{j+1} f_j(x)\\
&\geq \theta x_{j+1} + \theta f_j(x) - \theta x_{j+1} f_j(x)\\
&= \theta f_{j+1}(x),
\end{align*}
which shows that $f_{j+1}$ is weakly homogeneous over $[0,1]$ with tolerance $\eta = 1$.

We also show that $f$ is weakly homogeneous over $[0,1]$ with degree $d \leq 1/2$. Consider the case $n=2$, where 
$$
f(x) = x_1 + x_2 - x_1 x_2.
$$
The condition $f(\theta x) \leq \theta^d f(x)$ can be written as
$$
(1 - \theta^{2-d}) x_1 x_2 \leq (1 - \theta^{1-d}) (x_1 + x_2)
$$
for all $x_1, x_2 \in [0,1]$. This clearly holds when $x_1 = 0$ or $x_2 = 0$. Hence, it is equivalent to
$$
1 - \theta^{2-d} \leq (1 - \theta^{1-d}) \left( \frac{1}{x_1} + \frac{1}{x_2} \right)
$$
for all $x_1, x_2 \in (0,1]$, which reduces to 
$$
1 - \theta^{2-d} \leq 2 (1 - \theta^{1-d}).
$$
This can be rewritten as
\begin{equation}
\theta^{1-d} (2 - \theta) \leq 1.
\label{equ:thetad}
\end{equation}
The left-hand side is increasing in $d$ and achieves its maximum at $\theta^* = 1/(2(1-d))$. Hence, equality in (\ref{equ:thetad}) is achieved when $d = 1/2$.

\section{Proofs of Lemmas and Theorems}

\subsection{Proof of Lemma~\ref{lem:fact1}}
\label{app:fact1}

For any $x, y \in \mathcal{X}$, we have
\begin{align*}
f(x+y) - f(x) 
=& f\left(x + \sum_{i=1}^n y_i e_i\right) - f\left(x + \sum_{i=2}^n y_i e_i\right)\\ 
&+ f\left(x + \sum_{i=2}^n y_i e_i\right) - f\left(x + \sum_{i=3}^n y_i e_i\right)\\
&\vdots \\ 
&+ f(x + y_n e_n) - f(x) \\
&\leq f\left(\sum_{i=1}^n y_i e_i\right) - f\left(\sum_{i=2}^n y_i e_i\right)\\
&+ f\left(\sum_{i=2}^n y_i e_i\right) - f\left(\sum_{i=3}^n y_i e_i\right)\\
&\vdots \\
&+ f(y_n e_n) - f(0)\\
&= f(y) - f(0),
\end{align*}
where the inequalities follow from the DR-submodular property. Combining with $f(0) \geq 0$, we conclude that $f(x+y) - f(x) \leq f(y)$, which shows that $f$ is subadditive on $\mathcal{X}$.

\subsection{Proof of Lemma~\ref{lem:sub}}
\label{app:sub}

If $f$ is a monotone submodular function, then by Lemma~\ref{lem:AN}, $u$ is a monotone submodular set function; hence, $u$ is also monotone subadditive. 

Now, consider the case when $f$ is a monotone subadditive function. For any $S, T \subseteq \Omega$, we have
\begin{align*}
u(S) + u(T) 
&= \E[f(X_S)] + \E[f(X_T)] \\
&= \E\left[f\left(\sum_{i \in S} X_i e_i\right)\right] + \E\left[f\left(\sum_{i \in T} X_i e_i\right)\right].
\end{align*}

By the monotonicity and subadditivity of $f$, for every $x$ in the domain of $f$,
\begin{align*}
f\left(\sum_{i \in S} x_i e_i\right) + f\left(\sum_{i \in T} x_i e_i\right) 
&\geq f\left(\sum_{i \in S} x_i e_i + \sum_{i \in T} x_i e_i\right) \\
&\geq f\left(\sum_{i \in S \cup T} x_i e_i\right).
\end{align*}
Thus, it follows that
$$
u(S) + u(T) \geq \E[f(X_{S \cup T})] = u(S \cup T).
$$

\subsection{Proof of Lemma~\ref{lem:fact2}}
\label{app:fact2}

By monotonicity, $f(x) \leq f(\lceil 1/\lambda \rceil \lambda x)$, and by subadditivity, $f(\lceil 1/\lambda \rceil \lambda x) \leq \lceil 1/\lambda \rceil f(\lambda x)$. Combining these, we conclude that
$$
f(x) \leq \lceil 1/\lambda \rceil f(\lambda x).
$$

\subsection{Comparison of the Input and Output Distributions for Algorithm~\ref{alg:disc}} 
\label{sec:prop}

We present some comparison properties for distributions $P$ and $Q$, where $Q$ is the output of Algorithm~\ref{alg:disc} for the input distribution $P$ in the following lemma.

\begin{lemma} \label{lem:comp} 
For two distributions $P$ and $Q$, where $Q$ is the output of Algorithm~\ref{alg:disc} for the input distribution $P$, the following properties hold:
\begin{enumerate}
\item[(i)] $Q(x) \geq P(x) - \epsilon$ for all $x$,
\item[(ii)] $Q(x) \geq P(x)$ for all $x \leq \tau$, and
\item[(iii)] $Q(x) \leq P(x) + \epsilon$ for all $x \geq \tau$.
\end{enumerate}
\end{lemma}

Note that the properties asserted in Lemma~\ref{lem:comp} depend only on the parameter $\epsilon$ and the $(1-\epsilon)$-quantile $\tau$ of distribution $P$, and not on the parameter $a$. 

\proof{Proof.} To show property (i), note that on $(\tau,\infty)$, $Q$ has only one atom at $f^{-1}(H)$ at which it achieves the value $1$, and $P$ is an increasing function. Hence, for all $x$, 
\begin{align*}
Q(x)-P(x) 
&\geq P(\tau) - P(f^{-1}(H))\\
&= -\epsilon + (1-P(f^{-1}(H))) \\
&\geq -\epsilon,
\end{align*}
which establishes property (i). 

Property (ii) holds straightforwardly because Algorithm~\ref{alg:disc} transfers any mass of $P$ on $[0,\tau]$ to smaller or equal values. Hence, $Q(x) < P(x)$ can occur only for some $x > \tau$. 

Finally, property (iii) holds because, for all $x \geq \tau$,
\begin{align*}
Q(x) - P(x) 
&\leq Q(f^{-1}(H)) - P(f^{-1}(H))\\
&= 1 - P(f^{-1}(H))\\
&\leq 1 - P(\tau) = \epsilon.
\end{align*}
\endproof

As an example, consider the Pareto distribution $P$, i.e., $P(x) = 1 - (x_m/x)^\beta$ for $x \geq x_m$ and $P(x) = 0$ for $x \leq x_m$, where $x_m > 0$ and $\beta > 1$. Smaller values of $\beta$ correspond to heavier tails of the complementary cumulative distribution function $1-P(x)$. Note that $\beta > 1$ is necessary and sufficient for $\E[X]$ to be finite. It can be readily shown that $\tau = x_m (1/\epsilon)^{1/\beta}$ and $\E[X \mid X > \tau] = (\beta/(\beta-1)) \tau$. Importantly, $\E[X \mid X > \tau]$ can be arbitrarily larger than $\tau$ by taking $\beta$ sufficiently close to $1$. 

\begin{lemma} \label{lem:pareto} 
Assume that $P$ is a Pareto distribution with $x_m > 0$ and $\beta > 1$, and that $f$ satisfies $f(x,0,\ldots,0) = x$ for $x \in \mathbb{R}_+$. Then the following properties hold:
\begin{enumerate}
\item[(i)] For all $x$, $Q(x) - P(x) \geq -(1-(1-1/\beta)^\beta)\epsilon$, which is increasing in $\beta$ on $(1,\infty)$ from $-\epsilon$ to $-(1-1/e)\epsilon$.

\item[(ii)] For all $x \geq \tau$, $Q(x) - P(x) \leq (1-1/\beta)^\beta \epsilon$, which is increasing in $\beta$ on $(1,\infty)$ from $0$ to $(1/e)\epsilon$.

\item[(iii)] For all $x_m \leq x < \tau$, $Q(x) - P(x) \leq \min\{\epsilon / a^\beta, 1\}$.

\item[(iv)] $Q(0) = \max\{1 - \epsilon / a^\beta, 0\}$. In particular, $Q(0) > 0$ if and only if $a \tau > x_m$, i.e., $\epsilon < a^\beta$.
\end{enumerate}
\end{lemma}

\proof{Proof.} Property (i) follows from
\begin{equation*}
Q(x) - P(x) \geq P(\tau) - P(\E[X \mid X > \tau]) = -(1-(1-1/\beta)^\beta)\epsilon.
\end{equation*}

Property (ii) follows from
\begin{equation*}
Q(x) - P(x) \leq 1 - P(\E[X \mid X > \tau]) = (1-1/\beta)^\beta \epsilon \quad \text{for all } x \geq \tau.
\end{equation*}

Property (iii) follows from the observation that for each $x \in [x_m, \tau)$, there exists $1 \leq j \leq J$ such that
\begin{align*}
Q(x) - P(x) &\leq Q(x_j) - P(x_j)\\
&= P(x_{j+1}) - P(x_j)\\
&\leq P(\tilde{x}_{j+1}) - P(\tilde{x}_j),
\end{align*}
where $\tilde{x}_j = a \tau / (1-\epsilon)^{j-1}$ for $1 \leq j \leq J+1$. Under the condition $\epsilon < a^\beta$, 
\begin{align*}
P(\tilde{x}_{j+1}) - P(\tilde{x}_j) &= (\epsilon / a^\beta) (1-\epsilon)^{\beta (j-1)} (1 - (1-\epsilon)^\beta)\\
&\leq \epsilon / a^\beta.
\end{align*}

Finally, property (iv) holds because Algorithm~\ref{alg:disc} assigns $Q(0) = P(a \tau)$. For the Pareto distribution, $P(a \tau) = 1 - \epsilon / a^\beta$ if $a \tau \geq x_m$, and $P(a \tau) = 0$ otherwise. The condition $a \tau \geq x_m$ is equivalent to $\epsilon / a^\beta \leq 1$, yielding
$$
Q(0) = \max\{1 - \epsilon / a^\beta, 0\}.
$$
\endproof

\subsection{Proof of Lemmas for Theorem~\ref{DR}}
\label{app:DR}

\paragraph{Proof of Lemma~\ref{upper-end-f}} 
We first prove the upper bound. Let $T$ be the subset of $S$ containing every $i \in S$ such that $X_i$ exceeds the threshold value $\tau_i$, i.e., $T = \{i \in S \mid X_i > \tau_i\}$.

By Lemma~\ref{lem:sub}, under the condition that $f$ is monotone and either subadditive or submodular, $u$ is a monotone subadditive function. Hence,
\[
\E[f(X_S)] \leq \E[f(X_T)] + \E[f(X_{S\setminus T})].
\]
Noting that
\begin{align*}
\E[f(X_S)] &\leq 2 \max\left\{\E[f(X_T)], \E[f(X_{S\setminus T})]\right\} \\
&\leq 2 \max\left\{\E[f(X_T)], w(S)\right\},
\end{align*}
and by the subadditivity of $u$,
\[
\E[f(X_T)] \leq \E\left[\sum_{i\in T} f(X_i)\right] \leq \epsilon \sum_{i\in S} H_i,
\]
where $H_i = \E[f(X_i)\mid X_i > \tau_i]$ if $P_i(\tau_i) < 1$ and $H_i = 0$ otherwise.  

Thus, the upper bound follows:
\[
\E[f(X_S)] \leq 2 \max\left\{\epsilon \sum_{i\in S} H_i, w(S)\right\}.
\]

Next, we prove the lower bound. Since $f$ is monotone,
\[
\E[f(X_S)] \geq \max\left\{\E[f(X_T)], \E[f(X_{S\setminus T})]\right\},
\]
so that
\begin{equation}
\label{lower-fsDR}
u(S) \geq \max\left\{\E[f(X_T)], w(S)\right\}.
\end{equation}

Now, note that
\begin{align*}
\E[f(X_T)] 
&= \sum_{U \subseteq S} \Pr(T = U) \E[f(X_T) \mid T = U] \\
&\geq \sum_{U \subseteq S: |U|=1} \Pr(T = U) \E[f(X_T) \mid T = U] \\
&= \sum_{i \in S} \Pr(X_i > \tau_i) \Pr(X_j \leq \tau_j, \forall j \neq i) \E[f(X_i) \mid X_i > \tau_i] \\
&\geq (\epsilon - \Delta) (1-\epsilon)^{k-1} \sum_{i\in S} H_i,
\end{align*}
where we used $\Pr(X_j > \tau_j) \geq \epsilon - \Delta$ and $\Pr(X_j \leq \tau_j) \geq 1 - \epsilon$ for all $j \in \Omega$, and the assumption $|S| \leq k$.  

Combining with (\ref{lower-fsDR}), we obtain
\[
u(S) \geq \max\left\{ (\epsilon - \Delta) (1-\epsilon)^{k-1} \sum_{i\in S} H_i, w(S) \right\},
\]
which establishes the lower bound.

\qed

\paragraph{Proof of Lemma~\ref{uv1}} 
By the upper bound in Lemma~\ref{upper-end-f}, we have
\begin{align*}
u(S) &\leq 2 \max\left\{\epsilon \sum_{i\in S} H_i, w(S)\right\} \\
&= \frac{2}{(1-\epsilon)^{k-1}} (1-\epsilon)^{k-1} \max\left\{\epsilon \sum_{i\in S} H_i, w(S)\right\} \\
&\leq \frac{2}{(1-\epsilon)^{k-1}(1-\Delta/\epsilon)} v_1(S),
\end{align*}
where the last inequality uses the lower bound in Lemma~\ref{upper-end-f}.

Similarly, for the lower bound,
\begin{align*}
u(S) &\geq (1-\epsilon)^{k-1} \left(1 - \frac{\Delta}{\epsilon}\right) \max\left\{\epsilon \sum_{i\in S} H_i, w(S)\right\} \\
&= \frac{1}{2} (1-\epsilon)^{k-1} \left(1 - \frac{\Delta}{\epsilon}\right) \cdot 2 \max\left\{\epsilon \sum_{i\in S} H_i, w(S)\right\} \\
&\geq \frac{1}{2} (1-\epsilon)^{k-1} \left(1 - \frac{\Delta}{\epsilon}\right) v_1(S),
\end{align*}
which completes the proof.

\qed

\paragraph{Proof of Lemma~\ref{lem:v2v1ak}} Recall the definition $\tilde{X}_i := \hat{X}_i \ind{\hat{X}_i > a\tau_i}$ and let $\tilde{X}_i^- = \hat{X}_i \ind{\hat{X}_i \leq a\tau_i}$ for $i\in [n]$. For any monotone submodular or monotone subadditive function $f$ and any $a \in [0,1]$,
\begin{align*}
v_1(S) &= \E[f(\hat{X}_S)] \\
&= \E[f(\tilde{X}^-_S + \tilde{X}_S)] \\
&\leq \E[f(\tilde{X}_S^-)] + \E[f(\tilde{X}_S)] \\
&\leq f(a\tau_S) + \E[f(\tilde{X}_S)] \\
&= f(a\tau_S) + v_2(S).
\end{align*}

Using the weak homogeneity of $f$ of degree $d$, we obtain
\[
v_1(S) \leq a^d f(\tau_S) + v_2(S).
\]

Moreover, for any monotone, subadditive or submodular function $f$,
\[
\E[f(\tilde{X}_S)] \geq \frac{\epsilon - \Delta}{k} f(\tau_S).
\]
Indeed, let $j \in \arg\max_{i \in S} \tau_i$. Then,
\begin{align*}
\E[f(\tilde{X}_S)] &\geq \Pr(\tilde{X}_j > \tau_j) f(\tau_j e_j) \\
&\geq (\epsilon - \Delta) f(\tau_j e_j) \\
&\geq \frac{\epsilon - \Delta}{k} f\left(\sum_{i \in S} \tau_j e_i\right) \\
&\geq \frac{\epsilon - \Delta}{k} f\left(\sum_{i \in S} \tau_i e_i\right) \\
&= \frac{\epsilon - \Delta}{k} f(\tau_S),
\end{align*}
where the last inequality follows from (\ref{equ:pi}).

Putting everything together, we have
\[
v_2(S) \geq \frac{1}{1 + a^d k / (\epsilon - \Delta)} v_1(S).
\]

\qed

\paragraph{Proof of Lemma~\ref{middle}} 
By definition of $q$, for every $\tau > 0$ and every $x \in [a\tau, \tau]$,
\[
q(x; \tau, \epsilon, a) \geq (1-\epsilon)x.
\]
Combined with the monotonicity of $f$, this immediately yields
\[
v(S) \geq \E[f((1-\epsilon)\tilde{X}_S)].
\]
Using the weak homogeneity of $f$ with tolerance $\eta$, we conclude
\[
v(S) \geq \frac{1-\epsilon}{\eta} v_2(S).
\]

\qed

\subsection{Proof of Corollary~\ref{cor:cf}}
\label{app:cf}

Note that for all $x \leq 1 - 1/\theta$ and $\theta \geq 1$, we have $1-x \geq e^{-\theta x}$. Consider the case where $\epsilon = c/k$, with $c$ a positive constant in the range $(0,1)$. For this choice of $\epsilon$, we have
\[
(1-\epsilon)^k \geq e^{-\theta c}
\]
provided $c/k \leq 1 - 1/\theta$, which is satisfied for any $\theta \geq 1$.  

Taking $\theta = 1/(1-c)$ ensures that this condition holds, yielding
\[
(1-\epsilon)^k \geq e^{-c/(1-c)}.
\]
This establishes the result stated in the corollary.

\subsection{Approximation results for distributions with arbitrary point masses} 
\label{app:largejump}

For any fixed $\Lambda \in (0,1]$, let $\Omega^* \subseteq \Omega$ denote the set of items whose value distributions have point masses larger than $\Lambda$. For each $i \in \Omega^*$, let $x^*_{i,1}, \ldots, x^*_{i,m_i}$ denote the locations of these point masses, and define $\alpha_{i,j} = P_i(x^*_{i,j}) - \lim_{z \uparrow x^*_{i,j}} P_i(z)$ as the corresponding mass at each point.  

For $i \in \Omega \setminus \Omega^*$, define $P_{i,j}$ and $P_{i,0}$ as follows: $P_{i,j}$ is the point mass distribution concentrated at $x^*_{i,j}$, i.e.,
\[
P_{i,j}(x) = \begin{cases} 
0, & x < x^*_{i,j}, \\
1, & x \geq x^*_{i,j}, 
\end{cases}
\]
and
\[
P_{i,0}(x) = \frac{P_i(x) - \sum_{j=1}^{m_i} \alpha_{i,j} P_{i,j}(x)}{1 - \sum_{j=1}^{m_i} \alpha_{i,j}}
\]
if $\sum_{j=1}^{m_i} \alpha_{i,j} < 1$, and $P_{i,0}(x) = 0$ otherwise.  

With this notation, for every $i \in \Omega^*$, we can decompose
\begin{equation}
P_i(x) = \sum_{j=0}^{m_i} \alpha_{i,j} P_{i,j}(x),
\label{equ:decompose}
\end{equation}
where $\alpha_{i,0} := 1 - \sum_{j=1}^{m_i} \alpha_{i,j}$.

Using (\ref{equ:decompose}), we can express $u(S)$ as a weighted sum of stochastic valuation functions for any $S \subseteq \Omega$. For $S = \{i_1, \ldots, i_k\}$, define virtual items $i_{l,0}, \ldots, i_{l,m_{i_l}}$ for each $i_l \in S$. Then,
\[
u(S) = \sum_{j_1=0}^{m_{i_1}} \cdots \sum_{j_k=0}^{m_{i_k}} \left(\alpha_{i_1,j_1} \cdots \alpha_{i_k,j_k}\right) u(\{i_{1,j_1}, \ldots, i_{k,j_k}\}).
\]

This weighted sum has at most $(1/\Lambda + 1)^k$ terms. Each term $u(\{i_{1,j_1}, \ldots, i_{k,j_k}\})$ can be approximated using the sketch valuation functions: if $j_l = 0$, we discretize $P_{i_l,0}$ using Algorithm~\ref{alg:disc}; otherwise, we use the point mass distribution $P_{i_l,j_l}$.

The results of Theorem~\ref{DR} apply to any item value distributions $P_1, \ldots, P_n$ by choosing $\Delta \in (\epsilon, 1)$ in the decomposition. Similarly, the results of Corollary~\ref{cor:cf} hold by setting $\Lambda = 1/(k h(k))$ with $h(k) > 1$. If $h(k) = \omega(k)$, this yields a constant-factor approximation, with each discretized distribution having support size
\[
s = O\left( \frac{1}{d} k \log k \right).
\]

Consequently, each item is represented by a discretized distribution with support size $O((1/d) k \log k)$ and at most $1/\Delta = k h(k)$ point mass distributions. The overall sketch valuation function obtained from this decomposition has computational complexity
\[
O\big((s k h(k))^k\big).
\]

\subsection{Proof of Theorem~\ref{thm:conc}}

The proof for the upper end is identical to that in Theorem~\ref{DR}, so we focus on the lower end and the middle part.

\paragraph{Lower end.} 
Let $f^*$ denote a concave extension of $f$. Since $f^*(x) = f(x)$ for all $x \in \mathbb{R}_+^n$ and we consider item value distributions with positive support, we may work with
\[
v_1(S) = \E[f^*(\hat{X}_S)] \quad \text{and} \quad v_2(S) = \E[f^*(\tilde{X}_S)].
\]

Recall that
\begin{equation}
\E[f^*(\tilde{X}_S)] \geq \frac{\epsilon-\Delta}{k} \E[f^*(\tau_S)].
\label{equ:recall}
\end{equation}

Define $Z_i = \hat{X}_i - a \tau_i$. Note that $\tilde{X}_i = \hat{X}_i \ind{\hat{X}_i > a \tau_i} \geq Z_i$, and
\[
Z_i = (1-a) \hat{X}_i + a (\hat{X}_i - \tau_i).
\]

Since $f^*$ is monotone, concave, and subadditive, we have
\begin{align*}
v_2(S) 
&\geq \E[f^*(Z_S)] && \text{(monotonicity)}\\
&\geq (1-a) \E[f^*(\hat{X}_S)] + a \E[f^*(\hat{X}_S - \tau_S)] && \text{(concavity)}\\
&\geq (1-a) \E[f^*(\hat{X}_S)] + a \E[f^*(\hat{X}_S)] - a f^*(\tau_S)) && \text{(subadditivity)}\\
&\geq v_1(S) - \frac{a k}{\epsilon-\Delta} v_2(S) && \text{(using \eqref{equ:recall}).}
\end{align*}

\paragraph{Middle part.} 
The middle part follows exactly as in Theorem~\ref{DR}, noting that any concave function is weakly homogeneous with tolerance $1$.  

\qed

\subsection{Proof of Theorem~\ref{thm:coordinate}}
\label{app:coordinate}

Again, the upper end follows exactly as in Theorem~\ref{DR}. We focus on the lower end and middle parts.

\paragraph{Lower end.} 
The following lemma provides the required bound.

\begin{lemma}
Assume that $f$ is monotone, either subadditive or submodular, and coordinate-wise weakly homogeneous of degree $d$ over $[0,1]$. Then,
\[
v_2(S) \geq \frac{1}{1 + a^d k / (\epsilon - \Delta)} v_1(S).
\]
\end{lemma}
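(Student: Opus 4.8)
The plan is to mimic the proof of Lemma~\ref{lem:v2v1ak} from Theorem~\ref{DR}, replacing the single global application of weak homogeneity with $|S|$ coordinate-wise applications, one per item in $S$. First I would write, exactly as in the subadditive/submodular case,
\begin{align*}
v_1(S) &= \E[f((\hat{X}_i\ind_{\{\hat{X}_i\leq a\tau_i\}} + \hat{X}_i\ind_{\{\hat{X}_i> a\tau_i\}}, i\in S))]\\
&\leq \E[f((\hat{X}_i\ind_{\{\hat{X}_i\leq a\tau_i\}}, i\in S))] + \E[f((\tilde{X}_i, i\in S))]\\
&\leq f((a\tau_i, i\in S)) + v_2(S),
\end{align*}
using subadditivity (or, in the submodular case, the weak DR property as in Lemma~\ref{lem:sub}) together with monotonicity and $\hat{X}_i\ind_{\{\hat{X}_i\leq a\tau_i\}}\leq a\tau_i$.

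Next I would bound the ``overshoot'' term $f((a\tau_i, i\in S))$ by $a^d f((\tau_i, i\in S))$. In Theorem~\ref{DR} this was a single step; here I would peel off one coordinate at a time. Enumerate $S = \{i_1,\ldots,i_m\}$ with $m\le k$, and define the intermediate points $z^{(0)} = (\tau_i, i\in S)$ and $z^{(\ell)}$ obtained from $z^{(\ell-1)}$ by scaling the $i_\ell$-th coordinate by $a$, so $z^{(m)} = (a\tau_i, i\in S)$. Applying coordinate-wise weak homogeneity (with $\theta = a$, degree $d$) at coordinate $i_\ell$ to the point $z^{(\ell-1)}$ gives $f(z^{(\ell)}) \leq a^d f(z^{(\ell-1)})$; chaining these $m$ inequalities yields $f((a\tau_i, i\in S)) \leq a^{dm} f((\tau_i, i\in S)) \leq a^d f((\tau_i, i\in S))$ since $a\in[0,1]$, $d>0$, $m\ge 1$. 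Then, exactly as in the proof of Lemma~\ref{lem:v2v1ak} — taking $j\in\arg\max_{i\in S}\tau_i$ and using $\mathbb{P}(\tilde{X}_j > \tau_j) = \mathbb{P}(X_j > \tau_j)\geq \epsilon-\Delta$ together with monotonicity and the inequality $f(\sum_{i\in S}\tau_j e_i)\le k\,f(\tau_j e_j)$, which follows from subadditivity/submodularity via Lemma~\ref{lem:sub} — I get $v_2(S)\geq \frac{\epsilon-\Delta}{k} f((\tau_i, i\in S))$. Combining the three displays,
$$
v_1(S) \leq a^d f((\tau_i, i\in S)) + v_2(S) \leq \frac{a^d k}{\epsilon-\Delta}\, v_2(S) + v_2(S),
$$
which rearranges to the claimed bound $v_2(S)\geq \frac{1}{1+a^d k/(\epsilon-\Delta)}v_1(S)$.

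The main obstacle is making sure the coordinate-by-coordinate scaling argument is legitimate: each application of the coordinate-wise weak homogeneity inequality is at a different base point $z^{(\ell-1)}$ (whose already-processed coordinates have been shrunk by $a$), so I must check that the definition of coordinate-wise weak homogeneity is quantified over all $x$ in the domain, which it is, and that all the intermediate points $z^{(\ell)}$ stay in the domain — they do, since they are obtained from $(\tau_i,i\in S)$ by multiplying coordinates by factors in $[a,1]\subseteq[0,1]$ and the domain $\mathcal{X} = \prod\mathcal{X}_i$ contains these points whenever it contains $(\tau_i, i\in S)$ and $0$ (one can also interpolate coordinate-wise if needed). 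The middle-part estimate then proceeds verbatim as in Theorem~\ref{DR}, using that coordinate-wise weak homogeneity with tolerance $\eta$ applied once per coordinate gives $v(S)\geq ((1-\epsilon)/\eta)^k v_2(S)$, which together with this lemma produces the factor $2\eta^k(1+a^dk/(\epsilon-\Delta))/(1-\epsilon)^{2k}$ stated in the theorem.
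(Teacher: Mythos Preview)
Your proposal is correct and follows essentially the same approach as the paper. The paper's proof is a one-line reference back to Lemma~\ref{lem:v2v1ak} together with the ``simple fact'' that iterated coordinate-wise weak homogeneity gives $f((a\tau_i,i\in S))\le a^{d|S|}f((\tau_i,i\in S))\le a^d f((\tau_i,i\in S))$; your peeling argument with the intermediate points $z^{(\ell)}$ is exactly a spelled-out version of that fact, and the remaining steps (the subadditive split $v_1(S)\le f((a\tau_i,i\in S))+v_2(S)$ and the bound $v_2(S)\ge \frac{\epsilon-\Delta}{k}f((\tau_i,i\in S))$) are lifted verbatim from Lemma~\ref{lem:v2v1ak}, as intended.
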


\noindent
\textit{Proof.}  
The argument is analogous to the proof of Lemma~\ref{lem:v2v1ak}, using the fact that under coordinate-wise weakly homogeneous conditions,
\[
f(a \tau_S) \leq a^{d k} f(\tau_S) \leq a^d f(\tau_S).
\]  
\qed

\paragraph{Middle part.} 
As in Theorem~\ref{DR}, the middle part follows from repeated application of the weak homogeneity property coordinate-wise. This yields
\[
\E[f((1-\epsilon)\tilde{X}_S)] \geq (1/\eta)^k (1-\epsilon)^k \E[f(\tilde{X}_S)].
\]

\qed

\begin{figure}[t]
\centering
\includegraphics[width=0.75\linewidth]{./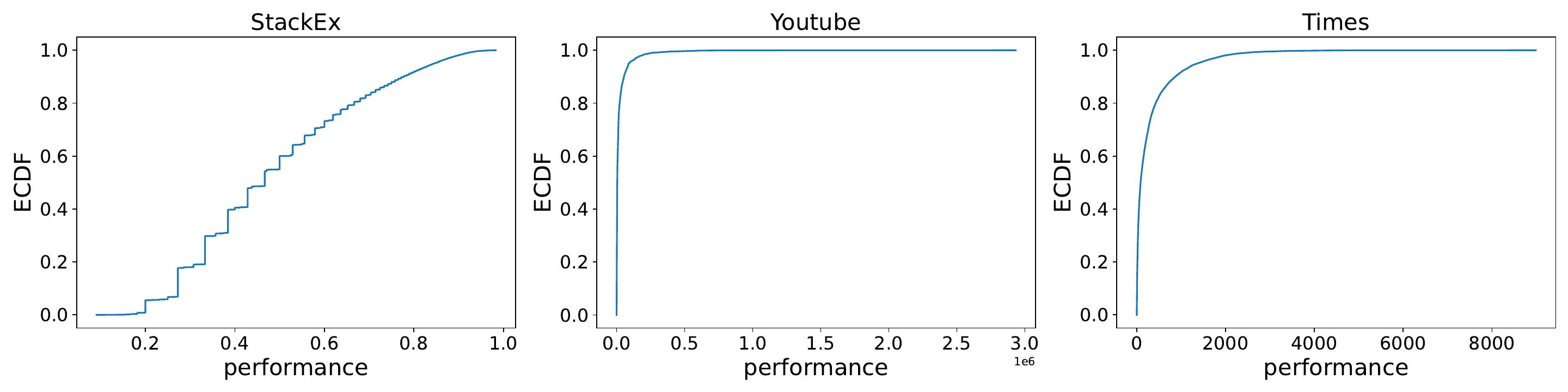}
\caption{Empirical CDFs of performance values for the three datasets.}
\label{fig:ecdf}
\end{figure}

\section{Supplementary Results for Real-World Data Experiments}  \label{sec:real}
		
\subsection{Information about Datasets}

\paragraph{StackExchange Dataset.} 
This dataset contains 35,218 questions and 88,584 answers on Academia.StackExchange, retrieved from the official StackExchange data dump on Jan 20, 2022 (\url{https://archive.org/details/stackexchange}). Each answer receives up-votes and down-votes indicating quality. We considered only users who submitted at least 100 answers. For an answer $a$ to question $q$ with $u(a,q)$ up-votes and $d(a,q)$ down-votes, we define a quality metric
\[
s(a,q) = \frac{u(a,q) + c_1}{u(a,q) + d(a,q) + c_2},
\]
where $c_1, c_2 > 0$. This metric is motivated by Bayesian estimation and used in \cite{SVY21}. The ratio increases with up-votes and decreases with down-votes. It is called \emph{balanced} when $c_1/c_2 = 1/2$ and \emph{conservative} if $c_1/c_2 < 1/2$. In our main experiment, we chose the conservative setting $(c_1, c_2) = (2,8)$. Results for other values are in Section~\ref{sec:stackex}.

\paragraph{YouTube Dataset.} 
The YouTube dataset~\cite{kaggle.com2021} contains information on 37,422 unique videos, including publication dates, view counts, likes, and dislikes, for the period August 2020 to December 2021 in the USA, Canada, and Great Britain. For our experiments, we filtered out YouTubers with fewer than 50 uploads. In the main experiment, we used the daily view counts as the performance measure. Other metrics were also tested (see Section~\ref{sec:youtube}).

\paragraph{New York Times Dataset.} 
This dataset~\cite{kaggle.com2020} contains 16,570 articles and associated comments from January to December 2020. Each article belongs to a section. For our experiments, we used all articles and their comment counts as performance measures.

\paragraph{Empirical Distributions.} 
Figure~\ref{fig:ecdf} shows the empirical CDFs of performance values aggregated over all data points for the three datasets. The distributions differ substantially, indicating that our method works across a wide range of input value distributions.

\subsection{StackExchange Dataset: Other Parameter Settings} \label{sec:stackex}

We tested $(c_1,c_2)$ pairs: $(2, 8)$, $(8, 32)$, and $(10, 10)$, corresponding to conservative (first two) and balanced (last) settings. Figure~\ref{fig:ecdf_Stackex} shows empirical CDFs for these settings. Larger values of $c_1$ and $c_2$ lead to stronger concentration of the performance values around $c_1/c_2$. Figure~\ref{fig:box_Stackex} shows corresponding approximation ratios. All ratios remain highly concentrated around 1, indicating that the choice of $(c_1,c_2)$ has little effect on the approximation ratios.

\begin{figure}[t]
\centering
\includegraphics[width=0.75\linewidth]{./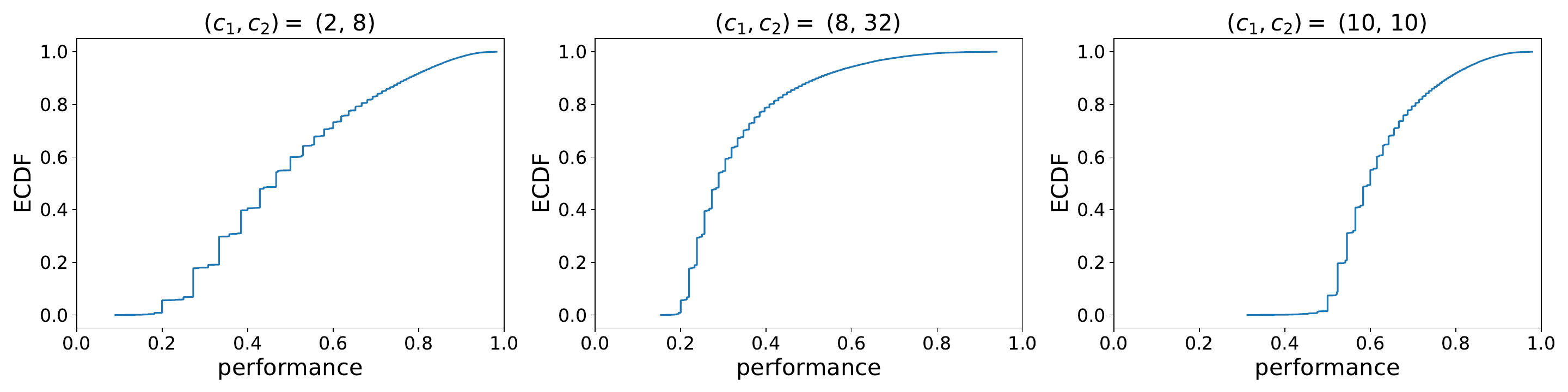}
\caption{Empirical CDFs of performance values for the StackExchange dataset under three parameter settings.}
\label{fig:ecdf_Stackex}
\end{figure}

\begin{figure}[t]
\centering
\begin{tabular}{c}
{\footnotesize (a) Set size $k=5$}\\
\includegraphics[width=0.75\linewidth]{./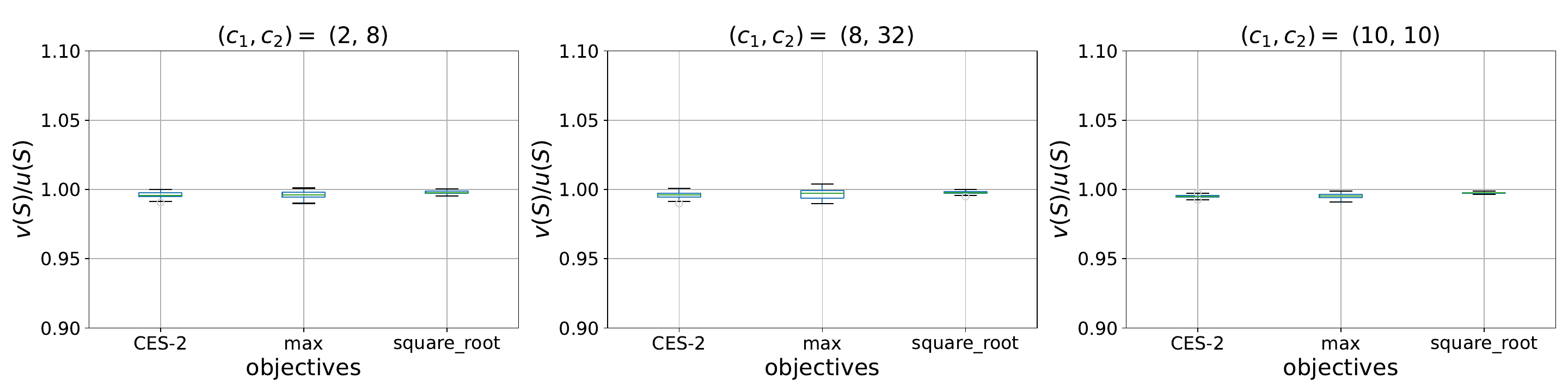}\\
{\footnotesize (b) Set size $k=10$}\\
\includegraphics[width=0.75\linewidth]{./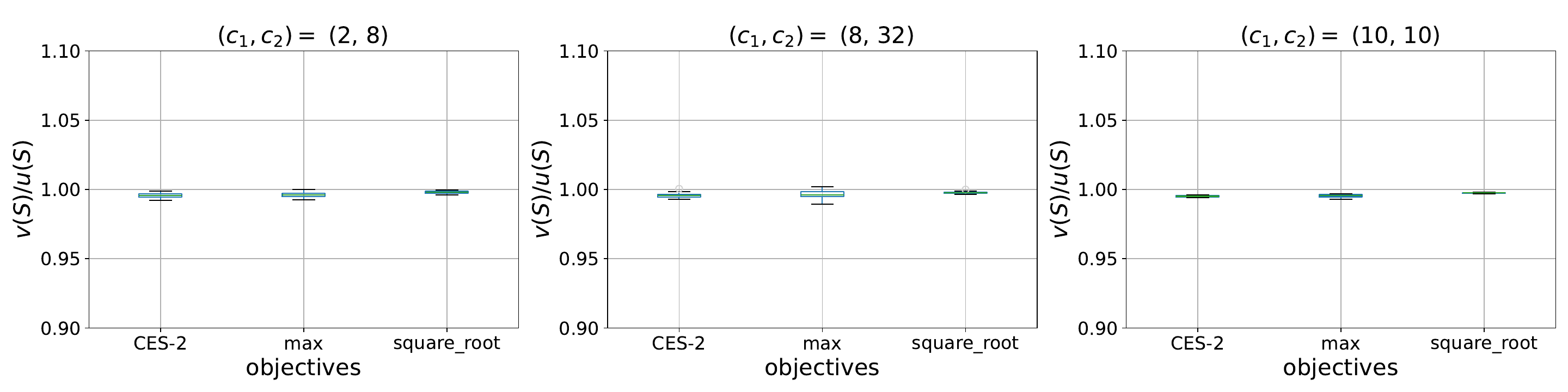}
\end{tabular}

\caption{Approximation ratios for different valuation functions on the StackExchange dataset, under different $(c_1,c_2)$ parameter settings.}
\label{fig:box_Stackex}
\end{figure}

\subsection{YouTube Dataset: Other Performance Metrics}  \label{sec:youtube}

We evaluate the function approximation accuracy for six different performance measures of a video content item. Consider a video that has been uploaded for {\tt \#lifetime} days and has accumulated {\tt \#views} views, {\tt \#likes} likes, and {\tt \#dislikes} dislikes over that period. The performance measures we consider are defined as follows.

The \emph{views per day} is the number of views per day, i.e., {\tt (\#views)/(\#lifetime)}. The \emph{log view count} is defined as {\tt log(\#views + 1)}. The \emph{standard like ratio} is defined as {\tt (\#likes)/(\#likes + \#dislikes)}. The \emph{video power index (VPI)} is defined as the product of the view ratio and the standard like ratio.

The \emph{Bayesian like ratio} is defined as
{\tt (\#likes + $c_1$)/(\#likes + \#dislikes + $c_2$)},
where $c_1$ and $c_2$ are positive parameters. We consider two parameter settings: a conservative case with {\tt $c_1 = 0.01 \times$ \#views} and {\tt $c_2 = 0.1 \times$ \#views}, and a balanced case with {\tt $c_1 = 0.05 \times$ \#views} and {\tt $c_2 = 0.1 \times$ \#views}. The Bayesian like ratio adjusts for low view counts and is conceptually analogous to the StackExchange scoring metric.


Figure~\ref{fig:ecdf_Youtube} shows the empirical CDFs of all six measures. These distributions differ in shape and the range of the support.  
 
\begin{figure}[t]
\centering
\includegraphics[width=0.75\linewidth]{./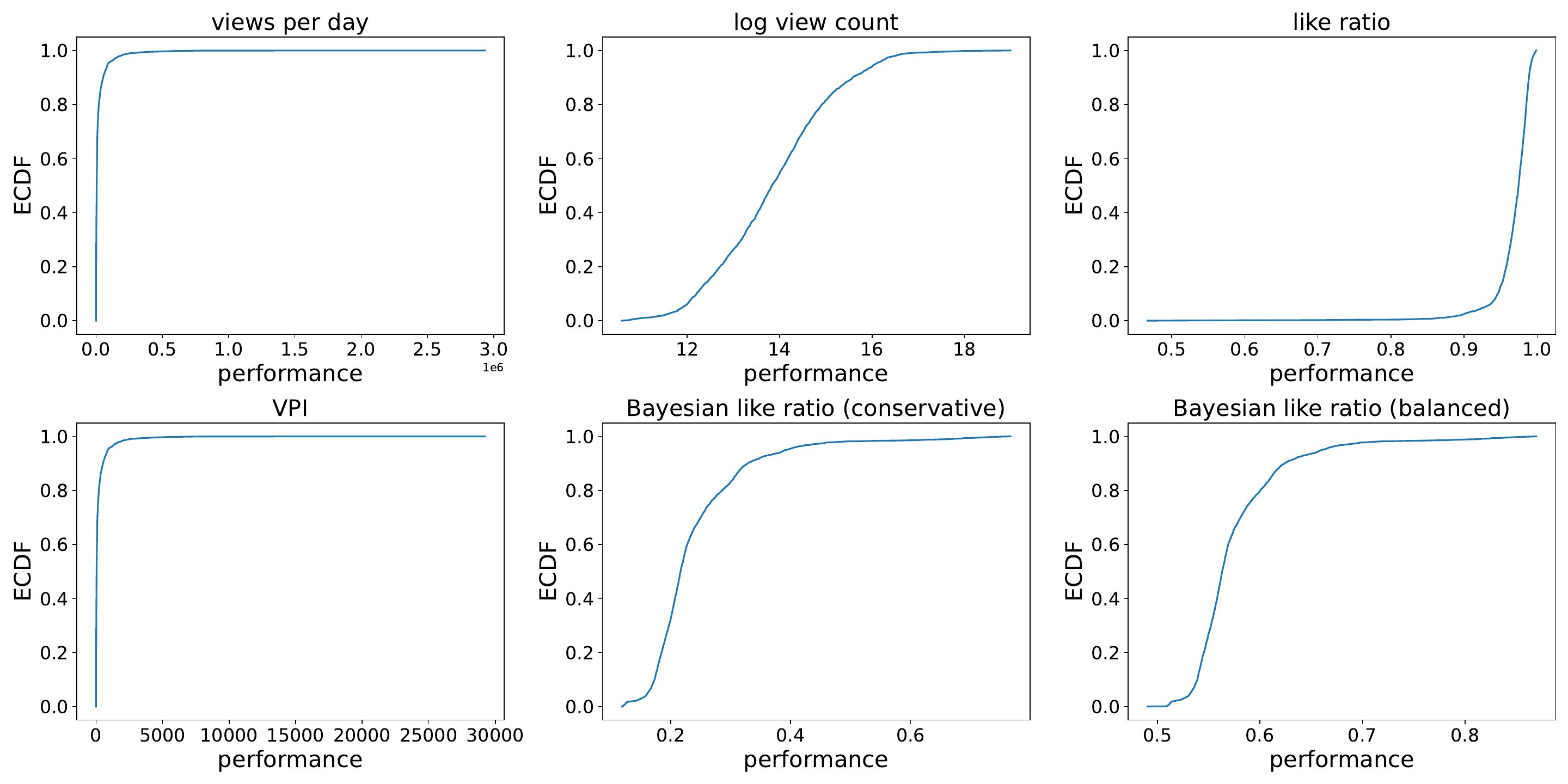}
\caption{Empirical CDFs of six performance metrics for the YouTube dataset.}
\label{fig:ecdf_Youtube}
\end{figure}

Figure~\ref{fig:box_Youtube} presents approximation ratios for three objective functions (max, CES of degree 2, and square root of sum). Ratios for the log view count, standard like ratio, and balanced Bayesian ratio are tightly concentrated around 1, while the view ratio and VPI show greater variability, consistent with the heavier tails of the corresponding value distributions.

\begin{figure}[t]
\centering
\begin{tabular}{c}
{\footnotesize (a) Set size $k=5$}\\
\includegraphics[width=0.75\linewidth]{./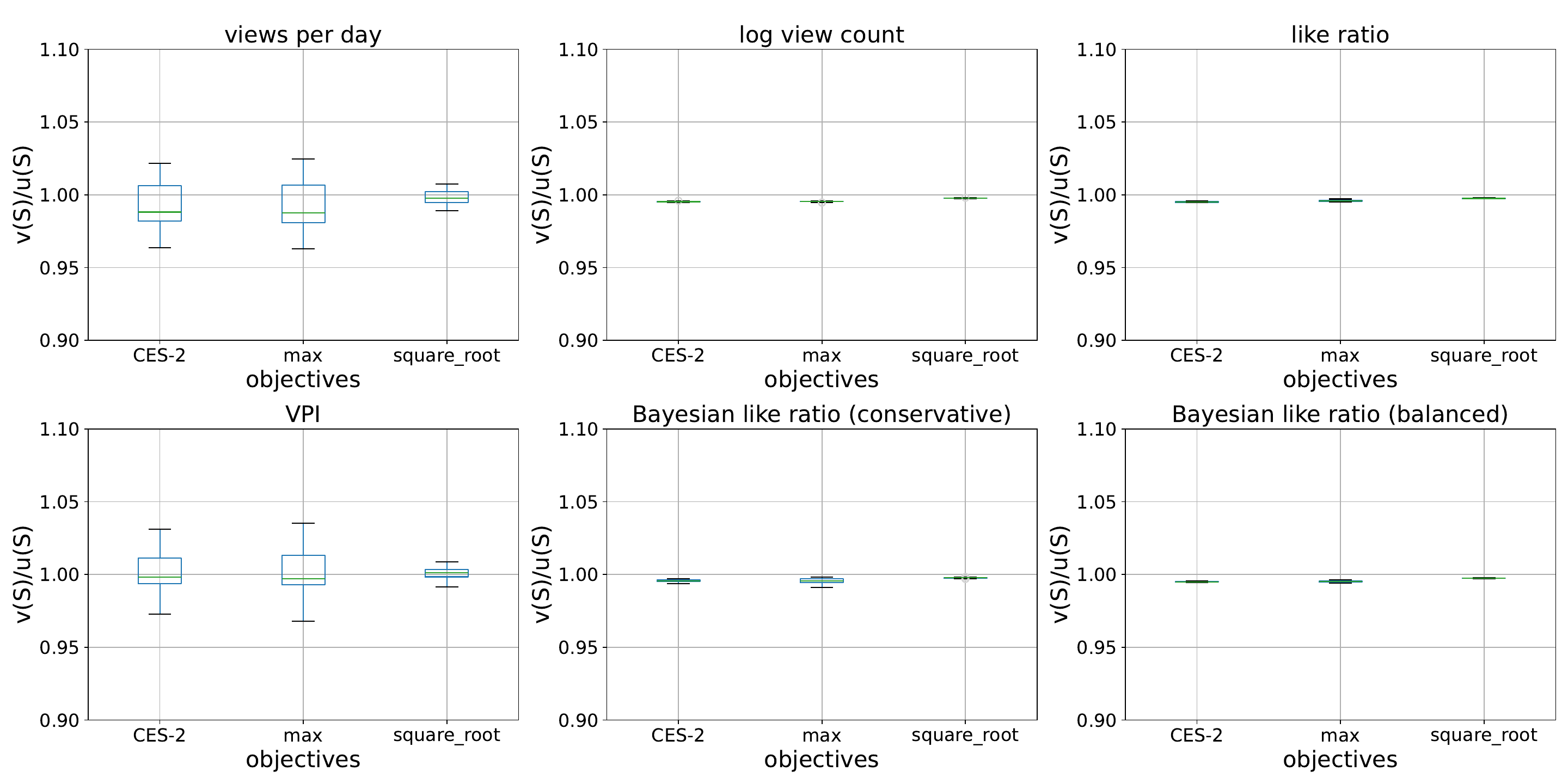}\\
{\footnotesize (b) Set size $k=10$}\\
\includegraphics[width=0.75\linewidth]{./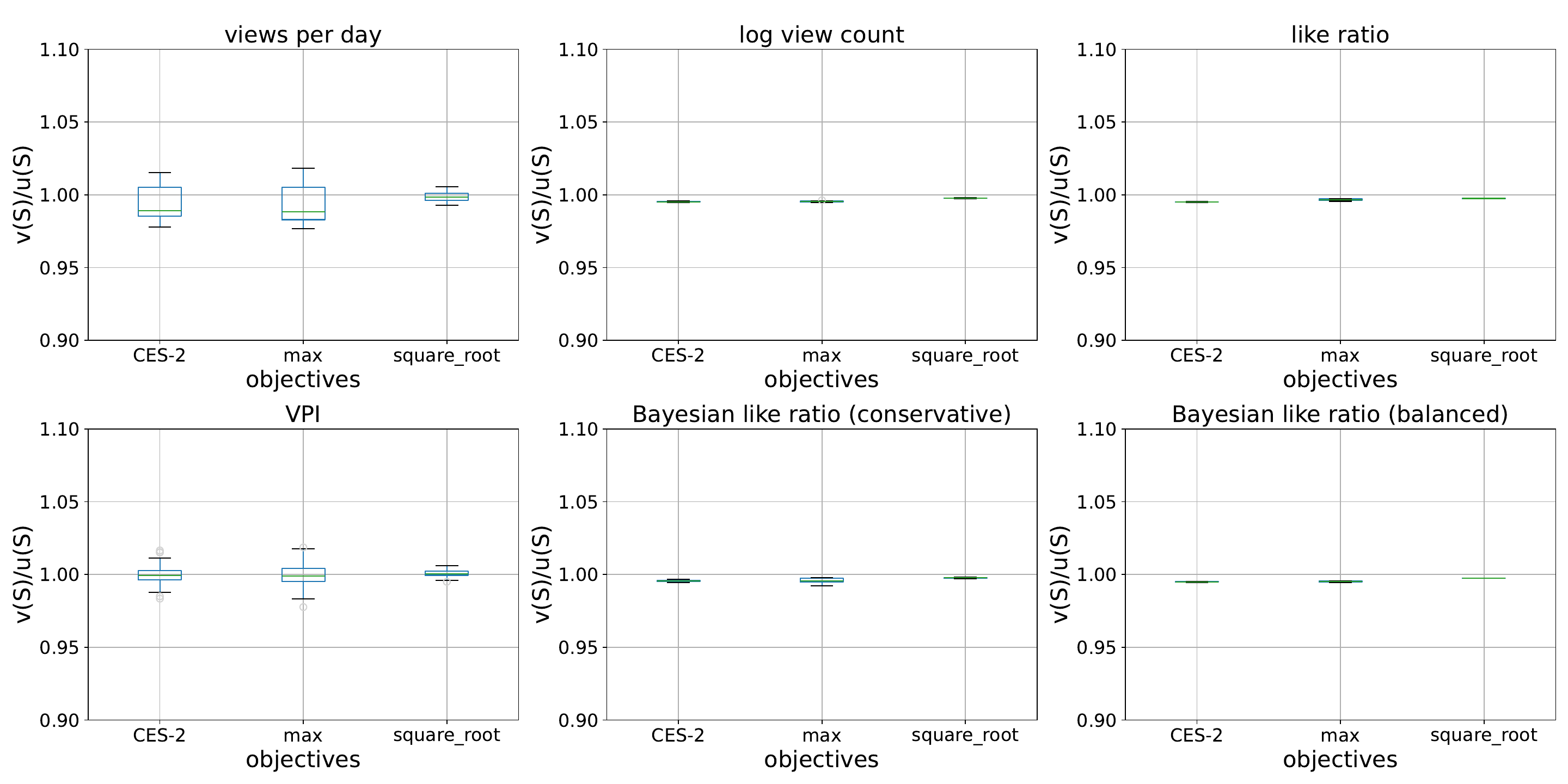}
\end{tabular}

\caption{Approximation ratios for different valuation functions on the YouTube dataset, under six performance metrics.}
\label{fig:box_Youtube}
\end{figure}


\end{document}